\pgfplotsset{compat=newest}
\tikzstyle{legend_general}=[rectangle, rounded corners, thin,
\renewcommand{\epsilon}{\varepsilon}
\DeclareMathOperator{\HG}{HG}
\DeclareMathOperator{\ex}{\mathcal{E}}
\newtheorem{theorem}{Theorem}[section]
\newtheorem{question}[theorem]{Question}
\newtheorem{claim}[theorem]{Claim}
\newtheorem{lemma}[theorem]{Lemma}
\theoremstyle{definition}
\newtheorem{definition}[theorem]{Definition}
\author{Peter Bradshaw}
\address{Department of Mathematics, Simon Fraser University, Vancouver, Canada}
\email{pabradsh@sfu.ca}
\title{On the hat guessing number of a planar graph class}
\begin{document}
\maketitle
\begin{abstract}
The \emph{hat guessing number} is a graph invariant based on a hat guessing game introduced by Winkler.
Using a new vertex decomposition argument involving an edge density theorem of Erd\H{o}s for hypergraphs, we show that the hat guessing number of all outerplanar graphs is less than $2^{125000}$. We also define the class of \emph{layered planar graphs}, which contains outerplanar graphs, and we show that every layered planar graph has bounded hat guessing number.
\end{abstract}
\section{Introduction}
The \emph{hat guessing game} is defined as follows. We have a graph $G$, and a \emph{player} resides at each vertex of $G$. For each vertex $v \in V(G)$, the player at $v$ can see exactly those players at the neighbors of $v$. In particular, a player cannot see himself. An \emph{adversary} possesses a large collection of hats of different colors. When the game starts, the adversary places a hat on the head of each player, and then each player privately guesses the color of his hat. The players win the game if at least one player correctly guesses the color of his hat; otherwise, the adversary wins. Before the game begins, the players may come together to devise a guessing strategy, but this strategy is known to the adversary, and the adversary may choose a hat assignment with the strategy of the players in mind. The hat guessing game was first considered for complete graphs by Winkler \cite{Winkler} and later for general graphs by Butler, Hajiaghayi, Kleinberg, and Leighton \cite{Butler}.

The hat guessing game is typically studied with the following two assumptions. First, it is assumed that the adversary possesses enough hats of each color so that no color will ever run out while the adversary is assigning hats to players. Second, it is assumed that the players follow a deterministic strategy to guess their hat colors; that is, the guess of a player at a vertex $v$ is uniquely determined by the hat colors at neighbors of $v$. Given a graph $G$, we say that the \emph{hat guessing number} of $G$ is the maximum number $k$ of hat colors such that the players on $G$ have a strategy that guarantees that at least one player will correctly guess his hat color when each player is given a hat with a color from the set $\{1, \dots, k\}$. We write $\HG(G)$ for the hat guessing number of $G$. In other words, if $\HG(G) \geq k$, then there exists a strategy for players on the graph $G$ such that for any hat color assignment $V(G) \rightarrow \{1, \dots, k\}$, at least one player will correctly guess the color of his hat.

We may describe the hat guessing game formally as a graph coloring problem as follows. Let $G$ be a graph, and let $S = \{1, \dots, k\}$ be a set of colors. We define a \emph{hat guessing strategy} on $G$ to be a family $\Gamma = \{\Gamma_v\}_{v \in V(G)}$ of functions, where each function is a mapping $\Gamma_v: S^{N(v)} \rightarrow S$; that is, each function $\Gamma_v$ takes a coloring of $N(v)$ as input and returns a color from $S$ as output. We say that the strategy $\Gamma$ is a \emph{winning strategy} if, for every (not necessarily proper) graph coloring $\phi:V(G) \rightarrow S$, there exists a vertex $v \in V(G)$ with neighbors $(u_1, \dots, u_t)$ such that $\Gamma_v$ maps $(\phi(u_1), \dots, \phi(u_t))$ to $\phi(v)$. It is clear that the players win the hat guessing game on $G$ with hat color set $S$ if and only if there exists a winning strategy $\Gamma$ on $G$. It was shown in \cite{KokhasCliquesAndConstructorsI} that with optimal play, the winner of the hat guessing game does not change even when each vertex $v \in V(G)$ receives a hat from an arbitrary list $L_v$ of size $|S|$, rather than from the set $S$. Furthermore, it was shown that if each vertex $v \in V(G)$ has a list $L_v$ of possible hat colors, then only the size of each list $L_v$ affects whether or not the players have a winning strategy on $G$.

We find it useful to give a simple example of a winning strategy in the hat guessing game. We consider the hat guessing game played on $K_2$ in which the adversary has only red hats and blue hats. We refer to the players as Alice and Bob. It is straightforward to see that Alice and Bob have the following winning strategy: Alice will guess the color of Bob's hat, and Bob will guess the opposite color of Alice's hat. This way, if Alice and Bob receive the same hat color, then Alice will guess correctly. On the other hand, if Alice and Bob receive different hat colors, then Bob will guess correctly. This shows that $\HG(K_2) \geq 2$, and in fact, $\HG(K_2) = 2$ \cite{Feige}.

While the rules of the hat guessing game are simple, establishing upper bounds for the hat guessing numbers achieved by graphs in large classes is surprisingly difficult.
In contrast to other game-related graph parameters such as game coloring number \cite{Zhu} and cop number \cite{Joret, Bowler}, no upper bound is known for the hat guessing number of graphs of bounded treewidth or graphs of bounded genus. Farnik \cite{Farnik} asked whether the hat guessing number of a graph $G$ is bounded by some function of the degeneracy of $G$, but this question remains unanswered, and graphs of degeneracy $d$ and hat guessing number at least $2^{2^{d-1}}$ have been constructed \cite{HeDegeneracy}.
In particular, the following more specific question, appearing in \cite{Jaroslaw} and \cite{Kokhas2021}, remains unanswered:
\begin{question}
\label{ques:main}
Is the hat guessing number of planar graphs bounded above by some universal constant?
\end{question}
While Question \ref{ques:main} has not yet been answered, Kokhas and Latyshev \cite{Kokhas2021} have shown examples of planar graphs $G$ for which $\HG(G) \geq 14$.
For some restricted graph classes, such as cliques \cite{Feige}, cycles \cite{Szcz}, complete bipartite graphs \cite{Alon, GadouleauHats}, graphs of bounded degree \cite{Farnik}, graphs of bounded treedepth \cite{HeDegeneracy}, embedded graphs of large girth \cite{Jaroslaw}, cliques joined at a single cut-vertex, and split graphs \cite{HeWindmill}, bounds for the hat guessing number have been determined.

In this paper, we will take a major step toward answering Question \ref{ques:main} by proving an affirmative answer for a large class of planar graphs. For outerplanar graphs, we will prove the following result.
\begin{theorem}
\label{thm:OP_intro}
If $G$ is an outerplanar graph, then $\HG(G) < 2^{125000}$.
\end{theorem}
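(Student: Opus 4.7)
The plan is to argue by contradiction: suppose $G$ is outerplanar with $\HG(G) \geq q := 2^{125000}$ and fix a winning strategy $\Gamma$ using colors $\{1,\ldots,q\}$. First I would reduce to the $2$-connected case by handling cut-vertex decompositions, since winning strategies on blocks sharing a vertex can be combined with only a bounded loss in the color budget. A $2$-connected outerplanar graph has a Hamiltonian outer cycle with non-crossing interior chords, and this is the structure I would exploit.

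Next, I would decompose $V(G)$ into a family $\mathcal{P}$ of disjoint paths using a breadth-first search from a fixed root. In an outerplanar graph the interaction between consecutive BFS layers is highly constrained, and each layer naturally splits into a bounded collection of arcs along the outer cycle; these arcs will be the paths in $\mathcal{P}$. For each $P \in \mathcal{P}$, I would associate a ``pattern'': a short combinatorial fingerprint of $\Gamma$ restricted to $P$, recording how $\Gamma$ responds to colorings on the neighbors of $P$ lying in earlier BFS layers. The number of possible patterns is bounded as a function of the path length, color set size, and interface size.

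The heart of the argument is to build an auxiliary $r$-uniform hypergraph $H$ on $\mathcal{P}$ whose hyperedges mark $r$-tuples of paths that share a common pattern (or a common interaction profile), and then to invoke the Erd\H{o}s edge density theorem for $r$-uniform hypergraphs. Because $q$ is so large, $H$ will have enough hyperedges to force a complete $r$-partite subhypergraph $K^{(r)}_{t,\ldots,t}$. This subhypergraph corresponds to a constellation of paths in $G$ on which $\Gamma$ exhibits severe redundancy: many tuples of paths behave identically with respect to the strategy. I would then leverage this redundancy to construct an adversary coloring that defeats $\Gamma$ on the induced subgraph, contradicting the assumption that $\Gamma$ wins. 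The specific value $2^{125000}$ should emerge from tracking constants through the layer decomposition, the count of patterns, and the Erd\H{o}s density threshold.

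The main obstacle lies precisely in this pattern and hypergraph construction: the patterns must be coarse enough to repeat often (so that $H$ is dense and Erd\H{o}s's theorem applies), yet fine enough that identical patterns genuinely constrain the strategy into a form the adversary can exploit. Striking this balance — and verifying that a $K^{(r)}_{t,\ldots,t}$-structure of repeated patterns translates into a concrete losing configuration on an outerplanar subgraph — is where the bulk of the technical work will go.
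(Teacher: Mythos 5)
The proposal does not follow the paper's route and, more importantly, has genuine gaps that prevent it from working.

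The paper's proof of this theorem factors through two structural lemmas that your sketch has no analogue for. First (Lemma~\ref{lem:petuniaOP}), every outerplanar graph decomposes as $V(G) = A \cup B$ where $G[A]$ is a \emph{petunia} (every block a subgraph of a fan), $B$ is independent, and every vertex of $A$ has at most $3$ neighbors in $B$; this lets Lemma~\ref{lem:partition} strip off $B$ at the cost of multiplying the guess budget. Second (Lemma~\ref{lem:petunia_partition}), petunias admit a vertex partition into forests $F_1,\dots,F_t$ whose quotient is a \emph{tree} and with interfaces $|N(F_i)\cap V(F_j)| \leq 3$. Your proposed decomposition into BFS layers split into arcs is not a valid substitute: a single BFS layer of a $2$-connected outerplanar graph can split into \emph{arbitrarily many} arcs, and the quotient over those arcs need not be a tree, so you lose precisely the tree structure that makes the inductive assembly of the adversary's hat assignment possible.

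The second and more serious gap is the role of the Erd\H{o}s hypergraph density theorem (Lemma~\ref{lem:Erdos1}). You want to build a hypergraph on the set of \emph{paths}, with hyperedges marking tuples of paths sharing a ``pattern,'' and argue that because $q$ is large the hypergraph is dense enough to contain a complete $r$-partite sub-hypergraph of ``redundant'' paths. This does not work as stated: the density of such a hypergraph depends on the number of paths and the number of patterns, not on $q$ directly, so there is no reason the density threshold is ever met; and even if you found $r$ paths with identical patterns, it is unclear how that redundancy yields a losing coloring, since the strategy may still distinguish them via their \emph{positions} in the graph. In the paper, the hypergraph appears at a different layer entirely (inside Theorem~\ref{thm:tree}): its vertex set is $U_j \times \{1,\dots,k\}$ where $U_j$ is an $r$-vertex interface between two parts, its edges encode the set $A_{\alpha,j}$ of interface colorings that the adversary can extend to a winning assignment on a subtree of parts, and the density bound is used \emph{contrapositively} --- if $A_{\alpha,j}$ were too small, its complement would contain a $K^{(r)}_\ell$, producing color lists of size $\ell$ on $U_j$ under which the adversary could never win, contradicting the induction hypothesis. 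Then a pigeonhole over the $\ell^r$ choices of $\alpha$ yields a \emph{single} interface coloring in $\bigcap_\alpha A_{\alpha,j}$, which is what allows the adversary's assignment to be glued consistently across the tree. Your proposal is missing both the direction of the density argument (complement, not the object itself) and the pigeonhole step over $\alpha$ that actually constructs the adversary's coloring.
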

We will also prove an upper bound for the hat guessing number of \emph{layered planar graphs}, which we roughly define as planar graphs that can be obtained by beginning with a $2$-connected outerplanar graph $G_1$, and then for $1 \leq i \leq \tau$, adding a $2$-connected outerplanar graph $G_{i+1}$ to some interior face of $G_i$ and adding non-crossing edges between $G_i$ and $G_{i+1}$. We also use the term \emph{layered planar graph} to describe a subgraph of such a planar graph. 
For layered planar graphs, we have the following result.

\begin{theorem}
\label{thm:planar_intro}
If $G$ is a layered planar graph, then $\log_2 \log_2 \log _2 \log_2 \log_2 \HG(G) < 149$.
\end{theorem}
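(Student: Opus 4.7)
The plan is to deduce Theorem~\ref{thm:planar_intro} from Theorem~\ref{thm:OP_intro} via an attachment-and-decomposition strategy. The first ingredient is a list-coloring version of Theorem~\ref{thm:OP_intro}: if each vertex $v$ of an outerplanar graph $H$ has a list $L_v$ of at most $N := 2^{125000}$ hat colors, then the players still have a winning strategy. This follows from Theorem~\ref{thm:OP_intro} together with the list-to-uniform equivalence of \cite{KokhasCliquesAndConstructorsI} cited in the introduction. Because a disjoint union of outerplanar graphs is outerplanar, the list bound applies to any vertex set that induces a disjoint union of outerplanar pieces.

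The second ingredient is a \emph{combination lemma}: given a layered planar graph $G$ together with a partition $V(G) = A \cup B$ in which the players on $G[A]$ (respectively $G[B]$) have a list winning strategy with at most $M_A$ (respectively $M_B$) hats per vertex, and in which the cross-edges between $A$ and $B$ have a suitably restricted planar structure, the players on $G$ have a winning strategy with at most $\Phi(M_A, M_B)$ hats per vertex, where $\Phi$ involves a single exponentiation. The idea is to fix each possible hat pattern on an ``interface'' subset straddling $A$ and $B$ and, for each fixed pattern, apply the list-winning strategies on both sides with lists that encode the forced responses compatible with the fixed interface. A careful accounting, exploiting that each vertex in one layer has neighbors only on a single face of the adjacent layer, should keep the blow-up to a single exponential.

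A direct iteration of the combination lemma -- once per layer -- would produce a tower of height equal to the number of layers $k$, which cannot be bounded universally. The main obstacle is to reduce the iteration depth to a fixed constant, which turns out to be five. I would accomplish this by partitioning the layers $G_1, \ldots, G_k$ into at most five groups such that each group induces a disjoint union of outerplanar graphs (hence list-winnable by the first ingredient), and then applying the combination lemma across the five groups to obtain the five-fold exponential bound. Proving the existence of such a constant-group partition for an arbitrary layered planar graph, compatible with its planar embedding, is the principal technical hurdle; I expect this to rely on an auxiliary structural argument about how outerplanar layers can be nested inside one another in a planar drawing, and the specific constant $149$ in the theorem will then arise from tracking the exponent $125000$ through the five applications of the combination lemma together with the initialization.
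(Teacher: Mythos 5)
Your high-level plan is in the right ballpark — the paper does indeed partition $V(G)$ into exactly five classes and then combines bounds for each class via a layered partition lemma — but you have left out precisely the step you yourself flag as the ``principal technical hurdle,'' and that step is the entire content of the paper's proof. The paper never proves or invokes a combination lemma in the list-coloring setting you describe; it uses the multiple-guess invariant $\HG_s$ throughout, together with Lemma~\ref{lem:gen_partition}, whose accounting is via the parameter $s$ rather than via an interface-fixing argument. Without a proof of your combination lemma, the proposal has a real gap, and I am not convinced that the ``fix the interface pattern and branch'' idea as sketched closes it, because a vertex in one group does not in general see a single bounded interface set: it can be adjacent to vertices of several other groups simultaneously, and the number of relevant interface patterns is not obviously bounded unless you already have per-vertex degree bounds between groups, which is exactly what the paper's five-color scheme is designed to produce.

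The other mismatch is your claim that every group can be made to induce a disjoint union of outerplanar graphs. The paper's five classes are green/blue/indigo (outerplanar), red (a petunia, which is still outerplanar), and pink, and the pink class only satisfies a maximum-degree-six bound — it is not claimed to be a disjoint union of outerplanar graphs, and a different tool (the Lov\'asz Local Lemma bound, Lemma~\ref{lem:LLL}) is used for it. You would either need to establish an outerplanar structure for the last class (which I doubt holds in general) or, as the paper does, allow one class to be handled by a bounded-degree argument instead. Finally, the constant $149$ falls out of the explicit recursive estimates in Lemma~\ref{lem:gen_partition}, which depend on the per-vertex cross-edge degree bounds ($5$ for green, $6$ for the rest) and on the exponent-tower shape of Theorem~\ref{thm:OP}; you cannot ``track the exponent $125000$ through five exponentiations'' without first pinning down those cross-degrees, which again returns you to the missing coloring construction.
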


Theorems \ref{thm:OP_intro} and \ref{thm:planar_intro} are the first results that show upper bounds for the hat guessing number of large topologically defined graph classes.
The main ingredients for the proofs of Theorems \ref{thm:OP_intro} and \ref{thm:planar_intro} will be a vertex partition lemma from Bosek et al.~\cite{Jaroslaw}, as well as a new theorem that bounds the hat guessing number of graphs that admit a vertex partition with a certain tree-like structure. The proof of this new theorem uses an argument based on a Tur\'an-type edge density problem, and the combination of our edge-density argument and the lemma of Bosek et al.~is what causes our upper bounds to be so large.

The paper will be organized as follows. In Section \ref{sec:Tools}, we will introduce some important tools that we will need for our two main theorems. In Section \ref{sec:OP}, we will show that every outerplanar graph has a vertex partition satisfying certain key properties, and then with the help of the tools introduced in Section \ref{sec:Tools}, we will prove Theorem \ref{thm:OP_intro}. In Section \ref{sec:planar}, we will use a similar strategy to extend our methods beyond outerplanar graphs and prove Theorem \ref{thm:planar_intro}. Finally, in Section \ref{sec:genus}, we will show that if an upper bound can be obtained for a certain stronger version of the hat guessing number on planar graphs, then an upper bound on the hat guessing number can be obtained for all graphs of bounded genus.

\section{Multiple guesses, vertex partitions, and edge density}
\label{sec:Tools}
In this section, we will outline three key tools that we will use to prove Theorem \ref{thm:OP_intro} and \ref{thm:planar_intro}.
These tools use a modified version of the hat guessing game in which each player attempts to guess his hat color $s$ times (without hearing the guesses of the other players). Given a graph $G$ and an integer $s \geq 1$, if $k$ is the maximum integer for which the players on $G$, when assigned hats from the color set $\{1, \dots, k\}$, have a strategy that guarantees at least one correct hat color guess when each player is allowed to guess $s$ times, then we write $\HG_s(G) = k$. 

The first of our tools follows from a simple application of Lov\'asz's Local Lemma \cite{LLL}; see \cite{Farnik} for more details.
\begin{lemma}
\label{lem:LLL}
Let $s \geq 1$ be an integer. If $G$ is a graph of maximum degree $\Delta$, then $\HG_s(G) < (\Delta+1)es$.
\end{lemma}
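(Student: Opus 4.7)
The approach is to apply the Lovász Local Lemma (LLL) to a uniformly random hat assignment, showing that once the number of colors $k$ is large enough, no strategy can be winning. Fix any strategy $\Gamma = \{\Gamma_v\}_{v \in V(G)}$ in which each $\Gamma_v$ maps $[k]^{N(v)}$ to a set of at most $s$ guessed colors, and set $k = \lceil (\Delta+1)es \rceil$. I would sample $\phi \colon V(G) \to [k]$ by choosing each $\phi(v)$ independently and uniformly, and then show $\Pr[\bigcap_v \overline{A_v}] > 0$, where $A_v$ is the bad event $\phi(v) \in \Gamma_v(\phi|_{N(v)})$ that player $v$ guesses correctly. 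Once this succeeds, some assignment defeats $\Gamma$, so $\Gamma$ is not winning and $\HG_s(G) < k$.

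The single-event bound $\Pr[A_v] \leq s/k$ is immediate because $\phi(v)$ is uniform on $[k]$ and independent of $\phi|_{N(v)}$, while $|\Gamma_v(\phi|_{N(v)})| \leq s$. The crucial strengthening is that this bound persists under conditioning: whenever $S \subseteq V(G) \setminus N[v]$, every $u \in S$ satisfies $v \notin N[u]$, so $\bigcap_{u \in S} \overline{A_u}$ is measurable with respect to $\phi|_{V(G) \setminus \{v\}}$ and therefore independent of $\phi(v)$. This yields $\Pr\!\left[A_v \mid \bigcap_{u \in S} \overline{A_u}\right] \leq s/k$ as well.

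This is exactly the hypothesis of the lopsided Lovász Local Lemma with dependency sets $D_v = N(v)$, so $d := \max_v |D_v| \leq \Delta$. The LLL condition $e p (d+1) \leq 1$ becomes $es(\Delta+1)/k \leq 1$, which holds by our choice of $k$, so $\Pr[\bigcap_v \overline{A_v}] > 0$. Irrationality of $(\Delta+1)es$ then upgrades $\HG_s(G) < k$ to $\HG_s(G) < (\Delta+1)es$. The step needing the most care is this dependency analysis: a naive symmetric LLL would require mutual independence of $A_v$ from $A_u$ for every $u$ at distance $\leq 2$ from $v$, forcing $d = O(\Delta^2)$ and only yielding $\HG_s(G) = O(\Delta^2 s)$. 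The improvement to a linear-in-$\Delta$ bound hinges on the lopsided formulation, which only asks for the weaker conditional bound above, and that bound is essentially free here because $\phi(v)$ itself is independent of any event not involving $\phi(v)$.
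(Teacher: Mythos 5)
Your proof is correct and matches the approach the paper invokes (it cites Farnik's Local Lemma argument rather than spelling one out). The decisive point you identify — that the lopsided LLL lets you take the dependency set to be $N(v)$ of size $\leq\Delta$, since the event $\bigcap_{u\in S}\overline{A_u}$ for $S\subseteq V(G)\setminus N[v]$ is $\phi|_{V(G)\setminus\{v\}}$-measurable and hence leaves $\phi(v)$ uniform, rather than taking the distance-two neighborhood of size $O(\Delta^2)$ that a naive variable-based dependency graph would force — is precisely what gives the linear-in-$\Delta$ bound stated in the lemma.
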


Our next tool tells us that if the vertices of a graph can be partitioned into sets satisfying certain conditions, then the hat guessing number of the graph is bounded. This tool was introduced by Bosek et al.~\cite{Jaroslaw}.

\begin{lemma}[\cite{Jaroslaw}]
\label{lem:partition}
Let $s \geq 1$ be an integer. Let $G$ be a graph, and let $V(G) = A \cup B$ be a partition of the vertices of $G$. If each vertex in $A$ has at most $d$ neighbors in $B$, then $\HG_s(G) \leq \HG_{s'}(G[A])$, where $s' = s(\HG_s(G[B]) + 1)^d$.
\end{lemma}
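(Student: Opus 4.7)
The plan is to prove the contrapositive: starting from any $s$-guess winning strategy $\Gamma = \{\Gamma_v\}_{v \in V(G)}$ on $G$ with $k$ colors (assuming $k > \HG_s(G[B]) =: m$, which is the only nontrivial case), I will construct an $s'$-guess winning strategy $\Gamma'$ on $G[A]$ with $k$ colors. This yields $\HG_s(G) \leq \HG_{s'}(G[A])$. Fix once and for all a subset $M \subseteq \{1,\dots,k\}$ of size $m+1$.

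For each $v \in A$, set $B_v = N(v) \cap B$, so $|B_v| \leq d$. The strategy $\Gamma'_v$ enumerates every function $\psi : B_v \to M$, and for each $\psi$ it includes the $s$ guesses $\Gamma_v(\phi|_{N(v) \cap A} \cup \psi)$ in its guess set. The total number of guesses made at $v$ is then $s(m+1)^{|B_v|} \leq s(m+1)^d = s'$, as required.

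To see that $\Gamma'$ wins, I aim to prove the following auxiliary claim: for every coloring $\phi_A : A \to \{1,\dots,k\}$, there exists an extension $\phi_B : B \to M$ such that, on $\phi = \phi_A \cup \phi_B$, the $\Gamma$-winning vertex (some $v$ with $\phi(v) \in \Gamma_v(\phi|_{N(v)})$) lies in $A$. Granting the claim, pick such a $\phi_B$ and let $v \in A$ be a $\Gamma$-winner; since $\phi|_{B_v}$ is one of the functions $\psi : B_v \to M$ enumerated by $\Gamma'_v$, the corresponding batch of $\Gamma'_v$'s guesses coincides with $\Gamma_v(\phi|_{N(v)})$ and therefore contains $\phi_A(v)$.

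The heart of the argument, and the step I expect to be the main obstacle, is the auxiliary claim; I would prove it by contradiction. If every $\phi_B : B \to M$ left the $\Gamma$-winner inside $B$, then the derived family $\tau = \{\tau_b\}_{b \in B}$ defined by $\tau_b(\phi'|_{N(b) \cap B}) := \Gamma_b(\phi_A|_{N(b) \cap A} \cup \phi'|_{N(b) \cap B})$ would be an $s$-guess winning strategy on $G[B]$ in which every vertex receives its hat from the fixed list $M$ of size $m+1$, contradicting $\HG_s(G[B]) = m$ via the Kokhas--Latyshev list equivalence cited at the start of the paper (only list sizes matter). The mild subtlety here is that $\tau_b$'s outputs formally live in $\{1,\dots,k\}$ while the hat at $b$ lies in $M \subseteq \{1,\dots,k\}$; but since the game only cares whether a guess equals $\phi'_B(b)$, this mismatch is pure bookkeeping and causes no harm.
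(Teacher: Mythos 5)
Your proof is correct and takes essentially the same approach as the paper: both fix a single list $M$ of size $\HG_s(G[B])+1$ on $B$, observe that with $B$'s lists fixed each $v\in A$'s induced guess set has size at most $s' = s(\HG_s(G[B])+1)^d$, and close by arguing that if no $\phi_B : B \to M$ lets $A$ win, the induced strategy on $G[B]$ would beat its hat guessing number. The paper phrases this from the adversary's side (constructing a losing hat assignment on $G$ given one on $G[A]$) whereas you phrase it from the players' side (constructing a winning strategy on $G[A]$ from one on $G$), but these are the same argument read in opposite directions.
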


By using the same approach originally used in \cite{Jaroslaw}, we can prove the following more general version of Lemma \ref{lem:partition}. Note that Lemma \ref{lem:partition} is obtained from the following lemma by setting $k = 2$, $V_1 = A$, and $V_2 = B$.

\begin{lemma}
\label{lem:gen_partition}
Let $s \geq 1$ be an integer.
Let $G$ be a graph with a vertex partition $V(G) = V_1 \cup \dots \cup V_k$, and let $\ell_1, \dots, \ell_k$ be positive integers. Assume that for each pair $i,j$ satisfying $1 \leq i < j \leq k$, each vertex $v \in V_i$ has at most $d_{i,j}$ neighbors in $V_j$. For $1 \leq i \leq k-1$, define
\[s_i = s \ell_{i+1}^{d_{i,i+1}} \cdot \ell_{i+2}^{d_{i,i+2}} \cdot \dots \cdot \ell_{k}^{d_{i,k}},\]
and define $s_k = s$. 
If, for each value $1 \leq i \leq k$, it holds that 
\[\HG_{s_i}(G[V_i]) < \ell_i,\]
then $\HG_s(G) <\max \{\ell_1, \dots, \ell_k\}$.
\end{lemma}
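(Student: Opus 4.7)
The plan is to adapt the proof of Lemma \ref{lem:partition} and iterate it across the parts $V_1, \dots, V_k$ in order. Set $L = \max\{\ell_1, \dots, \ell_k\}$, and for each $v \in V_i$ fix in advance a list $L_v \subseteq \{1, \dots, L\}$ of size $\ell_i$. Given any hat guessing strategy $\Gamma$ on $G$ with $s$ guesses per player, it suffices to exhibit a hat coloring $\phi$ with $\phi(v) \in L_v$ under which no player guesses correctly; since this coloring uses only colors in $\{1, \dots, L\}$, it will show $\HG_s(G) < L$.

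I would construct $\phi$ in $k$ stages, processing $V_1, V_2, \dots, V_k$ in that order, maintaining the invariant that after stage $i-1$ no vertex in $V_1 \cup \dots \cup V_{i-1}$ will guess correctly under $\Gamma$ regardless of the hats assigned later on $V_i \cup \dots \cup V_k$. At stage $i$, for each $v \in V_i$ I would form an enlarged guess rule $\Gamma'_v$ by taking the union of $v$'s $s$-element guess sets under $\Gamma$ as the coloring of $N(v) \cap (V_{i+1} \cup \dots \cup V_k)$ ranges over all list-compatible colorings. Since $v$ has at most $d_{i,j}$ neighbors in $V_j$ and each such neighbor has at most $\ell_j$ possible colors,
\[ |\Gamma'_v(\psi)| \;\leq\; s \cdot \prod_{j=i+1}^{k} \ell_j^{d_{i,j}} \;=\; s_i \]
for every coloring $\psi$ of $N(v) \cap V_i$. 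The family $\{\Gamma'_v\}_{v \in V_i}$ is therefore a hat guessing strategy on $G[V_i]$ with $s_i$ guesses per player, depending only on colors inside $V_i$.

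Applying the hypothesis $\HG_{s_i}(G[V_i]) < \ell_i$, together with the list-vs-set equivalence of the hat guessing game from Kokhas et al., I can then choose $\phi|_{V_i}$ from the lists $\{L_v : v \in V_i\}$ so that $\phi(v) \notin \Gamma'_v(\phi|_{N(v) \cap V_i})$ for every $v \in V_i$. By the very definition of $\Gamma'_v$ as a union, this guarantees that under the original strategy $\Gamma$ the vertex $v$ will not guess correctly no matter how hats are eventually assigned on $V_{i+1} \cup \dots \cup V_k$, restoring the invariant. After stage $k$ (where $s_k = s$ and the product over $j > k$ is empty) the full coloring $\phi$ defeats $\Gamma$, yielding the claim.

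The main obstacle is really just bookkeeping: one must verify that the ``union of guesses'' rule $\Gamma'_v$ really has size at most $s_i$, confirm that $\HG_{s_i}(G[V_i]) < \ell_i$ can be invoked in its list form to defeat $\Gamma'$ on $V_i$, and check that the inductive invariant genuinely survives every stage. The underlying idea is the same ``freeze one class, then bound the external uncertainty'' trick used in the proof of Lemma \ref{lem:partition}, iterated along the chain of parts.
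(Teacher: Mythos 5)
Your proposal is correct and is exactly the "same approach as Lemma \ref{lem:partition}" that the paper invokes without writing out: you iterate the two-part argument across $V_1, \dots, V_k$ in order, fixing lists of size $\ell_j$ on later parts, enlarging each vertex's guess set to absorb the $\prod_{j>i}\ell_j^{d_{i,j}}$ outcomes on unknown later neighbors (giving at most $s_i$ guesses), and defeating the induced strategy on $G[V_i]$ via the hypothesis and the list-version equivalence. The counting of $|\Gamma'_v(\psi)| \le s_i$ and the invariant that earlier parts stay defeated regardless of later hat choices are both handled correctly.
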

\begin{proof}
For $1 \leq i \leq k$, we fix a list of exactly $\ell_i$ colors at each vertex in $V_i$, and we consider the game in which each player makes $s$ guesses. We will prove the following statement:
\begin{quote}
For each value $1 \leq i \leq k$, if the hat colors on $V_1 \cup \dots \cup V_{i-1}$ are already fixed, then there exists a hat assignment on $V_i$ for which no vertex in $V_i$ correctly guesses its hat color, regardless of how $V_{i+1} \cup \dots \cup V_k$ is colored.
\end{quote}
By iteratively applying this statement for each value $1 \leq i \leq k$, we obtain a winning hat assignment on $G$ that uses at most $\max \{\ell_1, \dots, \ell_k\}$ colors, proving the lemma.

To prove this statement, consider a value $1 \leq i \leq k$, and assume that hat colors are fixed on $V_1 \cup \dots \cup V_{i-1}$. With these hat colors fixed, every vertex in $V_i$ has a hat guessing function depending only on $G[V_i \cup \dots \cup V_k]$. 
Furthermore, for each vertex $v \in V_i$, every possible coloring of $N(v) \cap (V_{i+1} \cup \dots \cup V_k)$ gives $v$ a unique guessing function depending only on $G[V_i]$, and there are $\ell_{i+1}^{d_{i,i+1}} \dots \ell_{k}^{d_{i,k}}$ possible colorings of $N(v) \cap (V_{i+1} \cup \dots \cup V_k)$. Therefore, with color lists fixed at every vertex of $V_{i+1} \cup \dots \cup V_k$, for each hat assignment on $G[V_i]$, $v$ will guess from a total of at most $s_i$ possible guesses. (Note that when $i = k$, the set $V_{i+1} \cup \dots \cup V_k$ is empty, so $v$ guesses from a total of $s_k = s$ guesses.) By our assumption, we may assign each vertex of $V_i$ a hat from its set of $\ell_i$ colors in such a way that no vertex of $G[V_i]$ guesses its hat color correctly, even with $s_i$ guesses. We give $G[V_i]$ such a hat assignment, and since each vertex $v \in V_i$ guesses from a set of at most $s_i$ colors, no vertex of $V_i$ guesses its hat color correctly. This completes the proof.
\end{proof}

Finally, we will define a third tool that we will need for Theorems \ref{thm:OP_intro} and \ref{thm:planar_intro}.
Our last tool relies heavily on theory related to a Tur\'an-type edge density problem. We will need some definitions. First, an \emph{$r$-partite $r$-uniform hypergraph} $\mathcal H$ is defined as a set $V$ of vertices and a collection $E$ of $r$-tuples from $V$, satisfying the following property: $V$ can be partitioned into $r$ parts $V_1, \dots, V_r$ so that every $r$-tuple in $E$ intersects each part $V_i$ at exactly one vertex. We often use the term \emph{$r$-partite $r$-graph} to refer to an $r$-partite $r$-uniform hypergraph, and we often call the $r$-tuples in $E$ \emph{edges}. 
We say that an $r$-partite $r$-graph is \emph{balanced} if $|V_1| = \dots = |V_r|$.
We say that an $r$-partite $r$-graph $\mathcal K$ is \emph{complete} if it contains every possible edge $e$ satisfying $|e \cap V_i| = 1$ for each vertex part $V_i$, and if $\mathcal K$ is also balanced and has $r \ell$ vertices, then we write $\mathcal K = K^{(r)}_{\ell}$.
Next, for integers $r \geq 1$, $\ell \geq r$, and $n \geq \ell$, we define the quantity $\ex^{(r)}(n,\ell)$ to be the maximum number of edges in a balanced $r$-uniform $r$-graph with $rn$ vertices and with no complete $K^{(r)}_{\ell}$ subgraph. Since a balanced $r$-uniform $r$-graph with $rn$ vertices and $n^r$ edges certainly must contain such a subgraph, we see that $\ex^{(r)}(n,\ell)$ is well-defined and less than $n^r$.

We give several examples of the quantity $\ex^{(r)}(n,\ell)$. When $r = 1$, an $r$-partite $r$-graph is simply a collection of vertices in which some of these vertices are also called edges, and a $K^{(1)}_{\ell}$ is simply a collection of $\ell$ of these ``edges." Any $1$-partite $1$-graph with at least $\ell$ edges clearly must contain a $K^{(1)}_{\ell}$, so for all $1 \leq \ell \leq n$, $\ex^{(1)}(n,\ell) = \ell - 1$.
When $r = 2$, the quantity $\ex^{(2)}(n,\ell)$ describes the maximum number of edges in a balanced bipartite graph on $2n$ vertices containing no copy of $K_{\ell,\ell}$. The question of determining the precise value of $\ex^{(2)}(n,\ell)$ is a special case of a classic problem of Zarankievicz, which asks how many edges a bipartite graph on $m+n$ vertices can have without containing a copy of $K_{s,t}$. This problem of Zarankiewicz has a long history and has led to many beautiful results; see \cite{Furedi} for an extensive survey of this area of combinatorics.

For our final tool, we will need the following theorem of Erd\H{o}s \cite{Erdos1964}, which gives an upper bound for $\ex^{(r)}(n,\ell)$. The original result of Erd\H{o}s assumes that $n$ is sufficiently large, but we will need a result that holds even for small $n$, so we present a slightly modified form of Erd\H{o}s's original result.

\begin{lemma}
\label{lem:Erdos1}
Let $r \geq 2$, let $\mathcal H$ be a balanced $r$-partite $r$-graph with $rn$ vertices, and let $2 \leq \ell \leq n$. If $|E(\mathcal H)| \geq 3n^{r - \frac{1}{\ell^{r-1}}}$, then $\mathcal H$ contains a copy of the $r$-graph $K^{(r)}_{\ell}$.
\end{lemma}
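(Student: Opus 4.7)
The plan is to prove this by induction on $r$, following the classical argument of Erdős. The base case $r=2$ is essentially the Kővári–Sós–Turán theorem, and the inductive step uses a double-counting argument on ``link'' hypergraphs combined with convexity.

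For the base case $r = 2$, I would show that a balanced bipartite graph $G$ on $2n$ vertices with at least $3n^{2-1/\ell}$ edges must contain a copy of $K_{\ell,\ell}$. Assuming there is no such copy, for each vertex $v$ in the first part $V_1$, the $\binom{d(v)}{\ell}$ many $\ell$-subsets of $V_2$ lying in $N(v)$ can each be shared among at most $\ell - 1$ vertices of $V_1$; hence
\[\sum_{v \in V_1} \binom{d(v)}{\ell} \leq (\ell - 1)\binom{n}{\ell}.\]
Combining this with the convexity bound $\sum_{v \in V_1}\binom{d(v)}{\ell} \geq n \binom{|E(G)|/n}{\ell}$ and solving for $|E(G)|$ yields an upper bound of roughly $(\ell-1)^{1/\ell} n^{2-1/\ell} + (\ell-1)n$; since $(\ell-1)^{1/\ell} \leq e^{1/e} < 1.45$ and $(\ell-1)n \leq n^{2-1/\ell}\cdot(n^{1/\ell - 1}\cdot\ell)$ is bounded by a constant multiple of $n^{2-1/\ell}$ throughout the range $2\leq \ell \leq n$, this contradicts $|E(G)| \geq 3n^{2-1/\ell}$.

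For the inductive step, fix $r \geq 3$ and assume the lemma holds with $r$ replaced by $r - 1$. Let $V_1, \dots, V_r$ be the parts of $G$, each of size $n$. For each $(r-1)$-tuple $\mathbf{v} = (v_1, \dots, v_{r-1}) \in V_1 \times \dots \times V_{r-1}$, let $d(\mathbf{v})$ denote the number of $w \in V_r$ with $(v_1, \dots, v_{r-1}, w) \in E(G)$, so $\sum_{\mathbf{v}} d(\mathbf{v}) = |E(G)|$. For each $\ell$-subset $S \subseteq V_r$, define the \emph{link} $(r-1)$-graph $H_S$ on $V_1 \cup \dots \cup V_{r-1}$ whose edges are the $(r-1)$-tuples $\mathbf{v}$ such that $(v_1, \dots, v_{r-1}, w) \in E(G)$ for every $w \in S$. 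If some $H_S$ satisfies $|E(H_S)| \geq 3n^{r-1-1/\ell^{r-2}}$, then by the induction hypothesis $H_S$ contains a copy of $K_{\ell}^{(r-1)}$, which together with $S$ produces a copy of $K_{\ell}^{(r)}$ in $G$. So I would assume $|E(H_S)| < 3n^{r-1-1/\ell^{r-2}}$ for every $S$, and double-count the pairs $(\mathbf{v}, S)$ with $\mathbf{v} \in E(H_S)$:
\[\sum_{S \in \binom{V_r}{\ell}} |E(H_S)| \;=\; \sum_{\mathbf{v}} \binom{d(\mathbf{v})}{\ell} \;\geq\; n^{r-1}\binom{|E(G)|/n^{r-1}}{\ell},\]
using convexity of $\binom{x}{\ell}$. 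Combining this with the inductive upper bound on each $|E(H_S)|$ (and $\binom{n}{\ell} \leq n^\ell/\ell!$) and solving for $|E(G)|$ forces $|E(G)| < 3n^{r - 1/\ell^{r-1}}$, a contradiction.

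The main obstacle will be managing constants so that the bound holds uniformly for all $n$ in the range $\ell \leq n$, rather than merely for asymptotically large $n$. Erdős's original statement only asserts $\ex^{(r)}(n,\ell) = O(n^{r-1/\ell^{r-1}})$ with a constant allowed to depend on $\ell$; pinning down the absolute constant $3$ requires carefully tracking the lower-order terms in the convexity step (for instance, replacing $\binom{x}{\ell}$ by the sharper $(x-\ell+1)^\ell/\ell!$), verifying that the base case retains enough slack to absorb the additive $(\ell-1)n$ term, and checking that the constant does not inflate through the induction on $r$.
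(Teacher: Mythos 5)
Your proposal is correct and follows essentially the same strategy as the paper: induct on $r$ with the K\H{o}v\'ari--S\'os--Tur\'an bound as the base case, then in the inductive step locate $\ell$ vertices of one part whose links share a dense $(r-1)$-partite $(r-1)$-graph and invoke the induction hypothesis there. The paper packages the counting step as a citation of Erd\H{o}s's set-intersection lemma (Lemma \ref{lem:Erdos}), whereas you unfold that same count directly via convexity of $\binom{x}{\ell}$; these are contrapositive formulations of one and the same pigeonhole argument, and the constant-tracking you flag as the remaining obstacle does go through (e.g.\ $\bar d - \ell + 1 \ge 2n^{1-1/\ell^{r-1}}$ whenever $n \ge \ell \ge 2$, so the constant actually improves from $3$ to $2^\ell \ge 4$ at each step).
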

\begin{proof}
The proof of the lemma is very similar to the original proof of Erd\H{o}s, and we defer the proof to the appendix.
\end{proof}

Before we introduce our last tool for proving Theorem \ref{thm:OP}, we will need some more definitions. For a graph $G$ and a vertex subset $U \subseteq V(G)$, we write $N(U)$ for the set of vertices with at least one neighbor in $U$. Given a graph $G$ with a vertex partition $V_1, \dots V_t$, we define the \emph{quotient graph} $G / (V_1, \dots, V_t)$ as the graph obtained from $G$ by contracting each part $V_i$ and deleting loops and parallel edges. In other words, $G/(V_1, \dots ,V_t)$ has $t$ vertices $v_1, \dots, v_t$, and $v_i$ is adjacent to $v_j$ if and only if some edge in $G$ joins a vertex of $V_i$ to a vertex of $V_j$.

We are now ready for our last main tool for proving Theorem \ref{thm:OP}. 
The following result, which is useful for bounding the hat guessing number of outerplanar  and layered planar graphs, is also interesting in its own right. 
Our proof of this result uses key ideas from the proof of Butler et al.~\cite{Butler} that $\HG(T) = 2$ for every tree $T$ on at least two vertices.

\begin{theorem}
\label{thm:tree}
Let $r,s \geq 1$ be integers.
Let $G$ be a graph, and let $V_1, \dots, V_t$ be a partition of $V(G)$ such that the quotient graph $G/(V_1, \dots, V_t)$ is a tree. Suppose that for each distinct pair $V_i, V_j$, it holds that $|N(V_i) \cap V_j| \leq r$. If $\HG_s(G[V_i]) < \ell$ for all $1 \leq i \leq t$, then $\HG_s(G) \leq (3 \ell)^{r\ell^{r-1}}$ when $r \geq 2$, and $\HG_s(G) \leq \ell (\ell - 1)$ when $r = 1$.
\end{theorem}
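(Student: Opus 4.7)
I will prove the contrapositive. Assume $\Gamma$ is a hat-guessing strategy on $G$ with $s$ guesses per player using a color palette $[k]$ of size $k > (3\ell)^{r\ell^{r-1}}$ (or $k > \ell(\ell-1)$ when $r = 1$). Observe that since $\HG_s$ of an isolated vertex is $s$, the hypothesis $\HG_s(G[V_i]) < \ell$ forces $\ell > s$. By the list formulation of the hat-guessing game due to Kokhas and Latyshev (cited in the introduction), $\HG_s(G[V_i]) < \ell$ implies that for every assignment of size-$\ell$ color lists to the vertices of $V_i$, there is a coloring from the lists that defeats $\Gamma$'s restriction to $G[V_i]$ with $s$ guesses.

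Root the quotient tree at an arbitrary bag $V_1$ and process bags in top-down order, committing colors to each bag's vertices as the traversal advances. At each non-root bag $V_j$ with parent $V_{p(j)}$ already colored, write $B_j = N(V_{p(j)}) \cap V_j$ and $B_j' = N(V_j) \cap V_{p(j)}$; each has size at most $r$. For each $w \in B_j'$, construct the $r$-partite $r$-uniform hypergraph $H_w$ on $rk$ vertices (one part of size $k$ per vertex of $B_j$), whose edges $(c_b)_{b \in B_j}$ are those tuples causing $w$'s $s$-element guess set to include $w$'s already-committed hat color. Each $H_w$ has at most $sk^{r-1}$ edges, so the union $\bigcup_{w \in B_j'} H_w$ has at most $rsk^{r-1}$ edges. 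Using $\ell > s$ and $k > (3\ell)^{r\ell^{r-1}}$, a direct calculation shows that the complement count $k^r - rsk^{r-1}$ strictly exceeds the Erd\H{o}s threshold $3 k^{r - 1/\ell^{r-1}}$ of Lemma~\ref{lem:Erdos1}. The lemma then supplies a copy of $K^{(r)}_{\ell}$ in the complement, equivalently size-$\ell$ lists $L_b \subseteq [k]$ for each $b \in B_j$ such that every tuple in $\prod_b L_b$ defeats every $w \in B_j'$. Extending these lists arbitrarily to the interior of $V_j$ and invoking the list form of $\HG_s(G[V_j]) < \ell$ produces a coloring of $V_j$ from these lists that additionally defeats every vertex of $V_j$.

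Iterating down to the leaves yields a global hat assignment defeating $\Gamma$ at every vertex, contradicting that $\Gamma$ was winning. The case $r = 1$ is handled by the same scheme with the elementary fact $\ex^{(1)}(n, \ell) = \ell - 1$ replacing Lemma~\ref{lem:Erdos1}; this recovers the Butler et al.\ argument for trees and gives the sharper bound $\ell(\ell - 1)$. The principal obstacle is that vertices $w \in B_j'$ may have neighbors not only in $V_j$ but also in siblings of $V_j$ not yet processed, so the hypergraph argument must be strengthened to defeat $w$ against all possible future sibling colorings. This is resolved by processing the children of each bag collectively within a single Erd\H{o}s step, whose threshold depends only on the interface parameter $r$ and the local threshold $\ell$ through the exponent $r\ell^{r-1}$, and not on the number of children or the tree size $t$. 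This independence from $t$ is precisely what prevents the geometric blow-up that would result from naively iterating Lemma~\ref{lem:partition}, and is what yields the stated bound.
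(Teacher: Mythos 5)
Your proposal identifies the right tools (the Erd\H{o}s density lemma, the list form of the game, the independence from tree size that distinguishes this result from a naive iteration of Lemma~\ref{lem:partition}), but the execution has a genuine gap that the paper's proof resolves differently.

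The central difficulty is ordering. In your top-down scheme you commit colors to $V_{p(j)}$ before any of its children have been colored. But the guess functions of the vertices of $V_{p(j)}$ depend on the colors assigned to their children, which means you cannot invoke the list form of $\HG_s(G[V_{p(j)}]) < \ell$ when you color the parent: that statement only defeats the strategy \emph{induced} on $G[V_{p(j)}]$ by a fixed coloring of the outside, and the outside is not yet fixed. You propose to repair this by later choosing the children's colors to defeat the already-colored parent vertices, but this runs into two problems. First, the claim that each hypergraph $H_w$ has at most $sk^{r-1}$ edges is unsupported; for a given tuple $(c_b)_{b\in B_j}$, the vertex $w$ either guesses correctly or not, and nothing prevents the bad set from having $\Theta(k^r)$ edges, since the adversary's choice of $w$'s color was not made with the children's colors in view. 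Second, you yourself flag that the guess of $w$ also depends on siblings of $V_j$, and your suggested fix -- processing all children of $V_{p(j)}$ ``collectively within a single Erd\H{o}s step'' -- cannot give a threshold independent of the number of children: a vertex $w$ may have up to $r$ neighbors in each of $m$ children, so the relevant hypergraph has up to $mr$ parts, and the Erd\H{o}s exponent $1/\ell^{mr-1}$ forces $k$ to grow with $m$, contradicting the uniform bound $(3\ell)^{r\ell^{r-1}}$ you are trying to prove.

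The paper avoids these issues by reversing the order. It first fixes an interface coloring $A_j$ on $U_j = N(V_i) \cap V_j$ for each neighboring $V_j$, chosen so that $A_j$ is simultaneously the restriction of a winning adversary assignment on the component $C_j$ for \emph{every} possible coloring $\alpha$ of $W_j = N(V_j) \cap V_i$ from the $\ell$-lists. Existence of such an $A_j$ follows from an intersection/pigeonhole step: for each of the $\ell^r$ choices of $\alpha$, the set $A_{\alpha,j}$ of achievable $U_j$-restrictions has at least $k^r - \ex^{(r)}(k,\ell)$ elements (here Erd\H{o}s's lemma is used not to bound the bad set directly, as you attempt, but inside a proof by contradiction against the inductive hypothesis applied to $C_j$ with $\ell$-lists at $U_j$), and since $\ell^r \ex^{(r)}(k,\ell) < k^r$ all $\ell^r$ of these sets intersect. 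Once every $A_j$ is fixed, the strategies of $V_i$'s vertices are fully determined, so $\HG_s(G[V_i]) < \ell$ can be applied cleanly; and once $\alpha$ is thereby determined, each $A_j$ extends to a full winning assignment on $C_j$. This ``fix interfaces first, color the root, then recurse'' scheme is exactly what lets the bound stay independent of $t$ and of the degree of the quotient tree, and it is the step your proposal is missing.
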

\begin{proof}
We would like to assume that for each $V_i$, every neighboring set $V_j$ satisfies $|N(V_i) \cap V_j| = r$.
This can be achieved by adding isolated vertices to each neighboring set $V_j$ and then adding edges between these new vertices and vertices of $V_i$. These extra vertices will not cause any of our hypotheses to be violated, nor will they decrease the hat guessing number of $G$.

We let $k = (3\ell)^{r\ell^{r-1}} + 1$ when $r \geq 2$, and we let $k = \ell(\ell-1)+1$ when $r = 1$. We aim to show that $\HG_s(G) < k$.
We first make the following claim.
\begin{claim}
 $\ell^r \ex^{(r)}(k,\ell) < k^r$.
\end{claim}
\begin{proof}
When $r = 1$, the claim asserts that $\ell (\ell - 1) < k$, which is clearly true. When $r \geq 2$, Lemma \ref{lem:Erdos1} states that $\ex^{(r)}(k,\ell) < 3  k^{r - \frac{1}{\ell^{r-1}}}$. Then, $\ell^r \ex^{(r)}(k,\ell) < 3\ell ^r k^{-\frac{1}{\ell^{r-1}}} k^r < k^r$.
\end{proof}

Now, we fix a guessing strategy $\Gamma = \{\Gamma_v\}_{v \in V(G)}$ on $G$. We prove the following stronger statement.
\begin{quote}
Let $1 \leq i \leq t$. If every vertex in $V_i$ has a list of $\ell$ colors and every other vertex in $G$ has a list of $k$ colors, then the adversary has a winning hat assignment.
\end{quote}
We induct on $t$. When $t=1$, the statement holds from the fact that $\HG_s(G[V_1]) = \HG_s(G) < \ell$. Now, suppose $t > 1$, and let $i$ be some value satisfying $1 \leq i \leq t$. We must show that if every vertex in $V_i$ has a list of $\ell$ possible hat colors and every other vertex of $G$ has a list of $k$ possible hat colors, then the adversary has a winning hat assignment.

In each neighboring set $V_j$ of $V_i$, there exists a set $U_j \subseteq V_j$ of exactly $r$ vertices that have neighbors in $V_i$, and there also exists a set $W_j \subseteq V_i$ of exactly $r$ vertices that are neighbors of $U_j$. We write $C_j$ for the component of $G \setminus V_i$ containing $V_j$. 
 If a hat assignment $\alpha$ on $W_j$ is fixed, then $\Gamma$ determines a unique hat guessing strategy on $C_j$. Furthermore, by the induction hypothesis, if each vertex of $C_j$ has the color list $\{1, \dots, k\}$, then with $\alpha$ fixed, the adversary has a winning hat assignment on $C_j$. We let $B_{\alpha, j}$ be the set of hat assignments on $C_j$ that cause the adversary to win the restricted game on $C_j$ when the players use the hat guessing strategy determined by $\alpha$. Then, we let $A_{\alpha,j}$ be the set of distinct colorings of $U_j$ that can be obtained by restricting an assignment from $B_{\alpha,j}$ to $U_j$. We see from the induction hypothesis that $A_{\alpha,j}$ is nonempty. Now, we make the following claim:
\begin{claim}
 If $\alpha$ is a fixed hat assignment on $W_j$, then $A_{\alpha,j}$ contains at least $k^r - \ex^{(r)}(k,\ell)$ distinct colorings.
\end{claim}
\begin{proof}
%
Suppose that $A_{\alpha,j}$ contains at most $k^r - \ex^{(r)}(k,\ell) - 1$ distinct colorings. We construct a balanced $r$-partite $r$-graph $\mathcal H$ on $kr$ vertices as follows. 
We write $U_j = \{u_1, \dots, u_r\}$.
Then, we let the $kr$ vertices of $\mathcal H$ be indexed by $(u_p, q)$, where $u_p \in U_j$ and $1 \leq q \leq k$. Finally, for each hat assignment in $A_{\alpha,j}$ in which each vertex $u_p \in U_j$ is given a hat of some color $\gamma_p$, we add an edge to $\mathcal H$ of the form $\{(u_1, \gamma_1), (u_2, \gamma_2),  \dots, (u_r, \gamma_r)\}$. Now, since $A_{\alpha,j}$ has at most $k^r - \ex(k,r) - 1$ edges, it follows that the complement $r$-graph $\overline{\mathcal H}$ contains at least $\ex^{(r)}(k,\ell) + 1$ edges, and hence $\overline{\mathcal H}$ contains a copy of $K^{(r)}_{\ell}$. In other words, there exist sets $L_1 \subseteq \{1, \dots, k\}, \dots, L_r \subseteq \{1, \dots, k\}$, each of size $\ell$, such that $A_{\alpha,j}$ contains no hat assignment in which each vertex $u_p$ is assigned a hat with a color from $L_p$. It then follows that when each vertex $u_p \in U_j$ is given the color list $L_p$ and every other vertex of $C_j$ is given a list of $k$ colors, the adversary has no winning hat assignment on $C_j$ using these lists. However, this contradicts the induction hypothesis applied to $C_j$ and with $V_j$ instead of $V_i$. Thus, the claim holds.
\end{proof}
Now, for each component $C_j$, we compute $A_{\alpha,j}$ for each of the $\ell^r$ hat assignments $\alpha$ on $W_j$ using the pre-assigned lists of size $\ell$. Since $\ell^r \ex^{(r)}(k,\ell) < k^r$, for each $j$, the intersection $\bigcap_{\alpha} A_{\alpha, j}$ is nonempty by the pigeonhole principle, where $\alpha$ is taken over the $\ell^r$ hat assignments on $W_j$. Hence, for each $j$, we can choose an assignment $A_j$ from this intersection, and we use $A_j$ to assign hats to the vertices in $U_j$. 

Next, with hats already assigned to each $U_j$, the vertices in $V_i$ have a fixed guessing strategy that depends only on the hat assignment at $V_i$. Since $\HG_s(G[V_i]) < \ell$, we can assign each vertex in $V_i$ a hat in such a way that no vertex guesses its hat color correctly. 

Finally, with colors assigned to $V_i$, we argue that we can complete a hat assignment on each component $C_j$ in such a way that no vertex in $C_j$ guesses its hat color correctly. Since $V_i$ has already been colored, a coloring $\alpha$ on $W_j$ is fixed, and hence the adversary can give a winning assignment to $C_j$ if and only if some assignment $B_j \in B_{\alpha,j}$ extends the already fixed coloring $A_j$ at $U_j$. However, since $A_j \in A_{\alpha,j}$, the set of restricted assignments of $B_{\alpha,j}$, such a winning assignment $B_j$ must exist by definition. Therefore, we use $B_j$ to color $C_j$, and hence no vertex in $C_j$ will guess its hat color correctly. By repeating this process for each component $C_j$, we assign hats to all remaining in such a way that no vertex will guess its hat color correctly.
\end{proof}

\section{Outerplanar graphs}
\label{sec:OP}
In this section, we prove Theorem \ref{thm:OP_intro}, showing that the hat guessing number of outerplanar graphs is bounded. We need some definitions and lemmas.
\begin{definition}
We define a \emph{petal graph} $G$ to be a graph obtained from a (possibly empty) path $P$ by adding a vertex $v$ adjacent to every vertex of $P$. We say that $v$ is the \emph{stem} of $G$. Then, we define a \emph{petunia} to be a graph in which every block is a subgraph of a petal graph.
\end{definition}
We note that a petal graph is an example of a \emph{fan graph}, which is constructed from a path and a coclique joined by a complete bipartite graph. We use the term \emph{petunia} rather than \emph{flower} so as not to be confused with other uses of the word \emph{flower} in combinatorics (e.g.~\cite{Rodl}).
In the following lemma, we show that petunias admit a vertex partition satisfying the conditions described in Theorem \ref{thm:tree}.
\begin{lemma}
\label{lem:petunia_partition}
If $G$ is a petunia, then $V(G)$ can be partitioned into forests $F_1, \dots, F_t$ such that the quotient graph $G / (F_1, \dots, F_t)$ is a forest and such that for each distinct pair $i,j$, $|N(F_i) \cap V(F_j)| \leq 3$.
\end{lemma}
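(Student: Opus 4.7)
The plan is to induct on the block-cut tree of $G$, assembling the partition one block at a time so that a single ``central'' part $F_1$ absorbs each block's stem together with every other path vertex, while the remaining path vertices become their own singleton parts.

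For the base case, suppose $G$ consists of a single block $B$ with stem $s$ and path $p_1, \ldots, p_k$. I would set $F_1 = \{s\} \cup \{p_i : i \text{ odd}\}$ and make $\{p_2\}, \{p_4\}, \ldots$ separate singleton parts. The set $F_1$ induces a star centered at $s$ (no two odd-indexed $p_i$ are consecutive on the path), so $F_1$ is a tree. The quotient is a star. Each singleton $\{p_{2l}\}$ has neighbors exactly $\{s, p_{2l-1}, p_{2l+1}\} \subseteq F_1$, giving $|N(\{p_{2l}\}) \cap F_1| \leq 3$ and $|N(F_1) \cap \{p_{2l}\}| = 1$.

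For the inductive step, I would pick a leaf block $B$ of the block-cut tree, which shares exactly one cut vertex $c$ with the rest of $G$. Apply induction to $G' = G - (V(B) \setminus \{c\})$ to get a partition satisfying the lemma on $G'$, and let $F_j$ be the part containing $c$. To extend: if $c = s_B$, I add every other path vertex of $B$ to $F_j$; if $c = p_i$, I add $s_B$ together with the path vertices of the same parity as $p_i$ (namely $p_i, p_{i \pm 2}, p_{i \pm 4}, \ldots$) to $F_j$. In either case each remaining path vertex of $B$ becomes a new singleton part. The extension keeps $F_j$ a forest: the newly added vertices form a star centered at $s_B$ (since no two selected path vertices are consecutive), and they attach to the old $F_j$ only through the single vertex $c$, so no cycle is created. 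The quotient stays a forest because each new singleton is adjacent in $G$ only to vertices of $F_j$ (non-selected path vertices are not mutually adjacent and have no neighbors outside $B$), so the new parts appear as pendants off $F_j$ in the quotient. The three-neighbor condition on new singletons follows exactly as in the base case; pairs among old parts are unaffected because the new $F_j$-vertices lie in the interior of $B$ and have no neighbors outside of $B$.

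The principal obstacle I anticipate is a parity conflict at cut vertices. If $c$ is a path vertex, I must pick the parity of the selected path vertices to include $c$, and a priori a second cut vertex in the same block could demand the opposite parity. The resolution is that I always process a \emph{leaf} block, which by construction contributes only one cut vertex $c$ to the merge, so the parity is forced by $c$ alone and no conflict arises. This is the delicate point that makes the local ``star plus singletons'' construction glue together consistently across the block-cut tree while keeping the neighbor count bounded by $3$.
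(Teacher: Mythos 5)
Your proposal is correct and takes essentially the same approach as the paper. The paper's proof constructs the identical ``star plus singletons'' partition inside each block (stem together with every second path vertex forms a star, the remaining path vertices become singletons), but phrases the gluing across blocks as a red/blue coloring propagated through the block-cut tree and takes the parts to be the maximal monochromatic connected subgraphs; your explicit induction peeling off a leaf block and merging its star into the part containing the cut vertex produces the same decomposition, and your observation that a leaf block sees only one pre-colored cut vertex is exactly the reason the paper's coloring procedure never encounters a parity conflict either.
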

\begin{proof}
We add edges to $G$ until every block of $G$ is a petal graph. 
Then, we will color $V(G)$ red and blue, and we will let each maximal connected monochromatic subgraph of $G$ give the vertex set of a tree $F_i$. After removing our extra edges from $G$, the subgraphs $F_1, \dots, F_t$ will make a family of forests satisfying the conditions of the lemma.

We give a general procedure for how to color the vertices of a block $H$ of $G$ with red and blue. We let $H$ consist of a stem $v$ and a path $P$. If $v$ is colored with either color, then we color the path $P$ formed by the non-stem vertices of $H$ so that $P$ alternates between red and blue. If a vertex $w \in V(P)$ is colored with either color, then extend the coloring of $w$ to the entire path $P$ formed by the non-stem vertices of $H$ so that $P$ alternates between red and blue. Then, we color $v$ with either color. Then, to color $G$, we begin by coloring a vertex of each component of $G$ with an arbitrary color, and then we extend the coloring using the procedure we have described. 
After $G$ is colored, we observe that each maximal connected monochromatic subgraph of $G$ intersects each block of $G$ either in a star or at a single vertex, and these single vertices form an independent set. Therefore, each maximal connected monochromatic subgraph of $G$ is a tree (which may span several blocks), and these trees give a partition $F_1, \dots, F_t$ of $V(G)$. 

We argue that for a pair $i,j$, the tree $F_i$ has at most three neighbors in $F_j$. If $F_i$ has at most one neighbor in $F_j$, then we are done. Otherwise, choose vertices $u,v \in V(F_i)$ so that $u$ has a neighbor $x \in V(F_j)$ and $v$ has a neighbor $y \in V(F_j)$ distinct from $x$. Since $F_i$ is a tree, there exists a path $P$ (possibly of length $0$) in $F_i$ from $u$ to $v$, and similarly, there exists a path $Q$ in $F_j$ from $x$ to $y$. Then, $V(P) \cup V(Q)$ gives the vertex set of a cycle $C$ in $G$, and $C$ must belong to a single block $H$ of $G$. It then follows that every vertex in $N(F_i) \cap V(F_j)$ belongs to the block $H$. Thus it is easy to see from our construction of $F_1, \dots, F_t$ that $F_i$ has at most three neighbors in $F_j$.

Finally, we argue that the quotient graph $G/(F_1, \dots, F_t)$ is a forest. Suppose that this quotient graph contains a cycle $C$. We assume without loss of generality that $F_1$ belongs to $C$, with neighbors $F_2$ and $F_3$. 
Using a similar argument to that above, there exists a cycle $C'$ in $G$ that visits $F_i$, then $F_{i+1}$, and then later visits $F_{i+2}$ without once again visiting $F_i$ (with $i \in \{1,2,3\}$ and addition ``wrapping around"). Since $C'$ is two-connected, $C'$ must belong to a single block $H$ of $G$. However, again, because $H$ is partitioned by the trees $F_i$ into a star and single independent vertices, the cycle $C'$ cannot satisfy these properties.
Therefore, the quotient graph $G/(F_1, \dots, F_t)$ is a forest.
\end{proof}

Lemma \ref{lem:petunia_partition} shows that the hypotheses of Theorem \ref{thm:tree} hold for petunias with $r = 3$. Therefore, we can obtain an upper bound on the hat guessing number of petunias as follows.
\begin{theorem}
\label{thm:petunia}
If $G$ is a petunia, then $\HG_s(G) \leq (3s^2+3s+3)^{3(s^2+s+1)^2}$.
\end{theorem}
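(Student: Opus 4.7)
The plan is to apply Theorem \ref{thm:tree} to the vertex decomposition guaranteed by Lemma \ref{lem:petunia_partition}. Lemma \ref{lem:petunia_partition} provides a partition $V(G)=V(F_1)\cup\cdots\cup V(F_t)$ of any petunia $G$ into forests whose quotient graph is itself a forest and in which $|N(F_i)\cap V(F_j)|\le 3$ for every pair $i\ne j$. Because $\HG_s$ of a disjoint union of graphs equals the maximum of $\HG_s$ over the components, I may pass to the connected components of $G$ and thereby assume the quotient is a single tree, putting the structural hypotheses of Theorem \ref{thm:tree} in place with $r=3$.

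The choice of $\ell$ is forced by matching the claimed bound: setting $\ell = s^{2}+s+1$ and $r=3$ in Theorem \ref{thm:tree} gives
\[
\HG_s(G)\;\le\;(3\ell)^{r\ell^{r-1}}\;=\;\bigl(3(s^{2}+s+1)\bigr)^{3(s^{2}+s+1)^{2}}\;=\;(3s^{2}+3s+3)^{3(s^{2}+s+1)^{2}},
\]
which is exactly the asserted inequality. To invoke Theorem \ref{thm:tree} with this value of $\ell$, one needs the per-part hypothesis $\HG_s(G[V(F_i)])<\ell=s^{2}+s+1$, and since each $G[V(F_i)]$ is itself a forest, the entire argument reduces to the single auxiliary inequality
\[
\HG_s(F)\;\le\;s^{2}+s\qquad\text{for every forest } F.
\]

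The main obstacle is this multi-guess bound for forests, the natural $s$-guess analogue of the classical theorem of Butler et al.\ that $\HG(T)=2$ for every tree $T$. This step is entirely internal to the class of forests and does not use any planarity or hypergraph-density machinery; one natural strategy is to root each tree of $F$ and extend Butler et al.'s inductive leaf-to-root construction, allowing each vertex to spend its $s$ guesses on an $s$-element block of plausible colors parameterized by its parent's hat, with a product-style accounting producing the $s(s+1)$ factor. Once this forest lemma is in hand, the remainder of the proof of Theorem \ref{thm:petunia} is simply the parameter check displayed above.
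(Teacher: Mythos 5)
Your structure matches the paper's: partition $G$ via Lemma \ref{lem:petunia_partition}, pass to connected components so that the quotient is a single tree, and apply Theorem \ref{thm:tree} with $r=3$ and $\ell=s^2+s+1$. The parameter check $(3\ell)^{r\ell^{r-1}}=(3s^2+3s+3)^{3(s^2+s+1)^2}$ is correct.

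The one loose end is the forest bound $\HG_s(F)\le s(s+1)$, which you flag as ``the main obstacle'' and only sketch. This is not a fresh obstacle at all: partition each tree of $F$ into singleton parts. The quotient is the tree itself, each singleton has $\HG_s$ equal to $s$ (a blind vertex can cover at most $s$ colors with $s$ guesses), and each pair of singletons trivially satisfies $|N(V_i)\cap V_j|\le 1$. So Theorem \ref{thm:tree} with $r=1$ and $\ell=s+1$ gives $\HG_s(F)\le \ell(\ell-1)=(s+1)s$ immediately; this is exactly how the paper obtains it (alongside a citation to Bosek et al.). Your sketch of adapting Butler et al.'s leaf-to-root construction to the multi-guess setting is plausible but stays informal, and the direction of the argument --- the adversary must defeat every strategy when $s^2+s+1$ colors are available, which is not literally ``each vertex spending its $s$ guesses'' --- would need to be set up carefully. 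Since Theorem \ref{thm:tree} with $r=1$ already delivers the bound, there is no need for that extra work.
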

\begin{proof}
We partition $G$ into forests $F_i$ as described in Lemma \ref{lem:petunia_partition}. By Theorem \ref{thm:tree} (and also by a result of Bosek et al.~\cite{Jaroslaw}), each forest $F_i$ satisfies $\HG_s(F_i) \leq (s+1)s$. Then, by applying Theorem \ref{thm:tree} to each component of $G$ with $r = 3$ and $\ell = (s+1)s+1$, we obtain the result.
\end{proof}

In order to prove Theorem \ref{thm:OP_intro}, we will need one more lemma. In the following lemma, we show that using the notion of petunias, we can find a useful vertex decomposition of any outerplanar graph.
\begin{lemma}
\label{lem:petuniaOP}
If $G$ is an outerplanar graph, then $V(G)$ can be partitioned into two sets $A$ and $B$ such that $G[A]$ is a petunia, $B$ is an independent set, and each vertex of $A$ is adjacent to at most three vertices of $B$.
\end{lemma}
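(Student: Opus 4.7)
The plan is to prove Lemma \ref{lem:petuniaOP} by first reducing the problem to the case where $G$ is 2-connected, and then using a structural induction on the weak dual tree of a maximal outerplanar extension.

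For the reduction, a general outerplanar graph decomposes into blocks (2-connected outerplanar subgraphs or single edges) meeting at cut vertices. If for each block $H$ I can find a valid partition $V(H) = A_H \cup B_H$ satisfying the lemma and with the additional property that every cut-vertex of $G$ contained in $H$ belongs to $A_H$, then the combined sets $A = \bigcup A_H$ and $B = \bigcup B_H$ form a valid partition for $G$: the blocks of $G[A]$ are precisely the $H[A_H]$'s, so $G[A]$ is a petunia; $B$ remains independent because any adjacency between two $B$-vertices lies inside a single block; and the bound of $3$ on the number of $B$-neighbors of any $A$-vertex holds block-by-block. It therefore suffices to prove the lemma for a 2-connected outerplanar $G$, under the strengthening that prescribed ``boundary'' vertices may be forced into $A$.

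For the 2-connected case, I extend $G$ to a maximal 2-connected outerplanar graph $G^*$ by adding non-crossing chords; a valid partition for $G^*$ is also valid for $G$. Let $C = v_1 \cdots v_n v_1$ be the outer Hamilton cycle and $T$ the weak dual tree (maximum degree $3$). Every leaf of $T$ is an ear triangle $v_{i-1} v_i v_{i+1}$ whose apex $v_i$ has degree $2$. A key observation is that when $n \geq 4$, no two degree-$2$ vertices of $G^*$ can be adjacent: otherwise the closing chords $v_{i-1} v_{i+1}$ and $v_i v_{i+2}$ would cross, contradicting outerplanarity, so the set of ear apexes is automatically an independent set. I now induct on $n$: pick an ear triangle and consider the maximal fan $F$ of $G^*$ having $v_{i-1}$ as a stem (equivalently, the subgraph induced by $v_{i-1}$ together with all its $G^*$-neighbors, which is indeed a fan because the triangles containing $v_{i-1}$ form a path in $T$). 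This fan attaches to $G^* \setminus V(F)$ along a single chord, so I may apply induction to the smaller remainder graph and then glue $F$ back in, placing the path-vertices and stem of $F$ into $A$ to contribute one petal block to $G[A]$.

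The main obstacle I anticipate is maintaining all three conditions simultaneously across the recursion. $B$ must stay independent, which the non-adjacency of ear apexes facilitates; blocks of $G[A]$ from different recursive stages must not merge into non-fan shapes, which may demand sacrificing a carefully chosen vertex to $B$ to separate two fans at a single cut-vertex; and each $A$-vertex must have at most three $B$-neighbors. The constant $3$ appears to correspond to a vertex on the outer cycle having its two cycle-neighbors plus at most one chord-neighbor end up in $B$, and I expect to control this by strengthening the inductive hypothesis to track exactly which vertices on the interface chord of the current subgraph are pinned to $A$. The subtle part of the argument will be the bookkeeping needed to guarantee that the recursively produced petunia blocks always meet only at cut vertices, so that their union is again a petunia.
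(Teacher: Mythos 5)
Your proposal correctly identifies the right general shape of the argument — induct on a triangulated $2$-connected outerplanar graph, peel a fan off the triangulation, and strengthen the inductive hypothesis to track the behaviour on an interface edge — and it correctly identifies the obstacles. But the obstacles you list are precisely the content of the proof, and the proposal as written does not resolve them, so this is a genuine gap rather than a complete alternative.

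Two concrete problems. First, a factual error: the maximal fan $F = G^*[N[v_{i-1}]]$ does \emph{not} attach to $G^* \setminus V(F)$ along a single chord. If the neighbours of $v_{i-1}$ in clockwise order are $w_1,\dots,w_t$, then removing $V(F)$ typically leaves several components, one hanging off each chord $w_j w_{j+1}$ that is not on the outer cycle, and you must recurse into each of them separately. Second, and more seriously, you have not said how to keep the recursively produced petunia blocks from merging with $F$. The vertices $w_j$ and $w_{j+1}$ both lie in $F$ and both must lie in $A$ on the interface with the $j$-th piece; if the recursion puts them in a common block of that piece's $A$-part, the union with $F$ is $2$-connected but not a sub-petal. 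The paper's remedy is the crucial step your sketch gestures at but does not supply: for each such chord $w_j w_{j+1}$, take the apex $x$ of the triangle on the far side, put $x$ in $B$, and put \emph{all} of $N(x)$ in $A$. This one move simultaneously (i) keeps $B$ independent, because $x$ has no neighbours in $B$; (ii) makes each neighbour of $x$ a cut-vertex of $G[A]$ separating the recursively obtained blocks; and (iii) sets up the recursion on the sub-outerplanar graphs hanging between consecutive neighbours of $x$. Without specifying this choice, ``sacrificing a carefully chosen vertex to $B$'' is not a proof.

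You also need to actually write down the strengthened inductive hypothesis, and it has to be more delicate than ``some interface vertices are pinned to $A$.'' In the paper's version the interface is an \emph{oriented} edge $uv$ with asymmetric budgets — $u$ gets $0$ $B$-neighbours, $v$ gets at most $2$, all others at most $3$ — and during the recursion the orientations of the interface edges $y_\alpha y_{\alpha+1}$ are chosen so that each shared vertex $y_\alpha$ is the head of at most one oriented interface edge. That asymmetry is exactly what makes the count close at $3$: a vertex of the form $y_\alpha$ absorbs at most $2$ $B$-neighbours from the one recursion where it is a head, $0$ from the other, and at most $1$ from being adjacent to the sacrificed vertex $x$. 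A symmetric ``pinned to $A$'' hypothesis does not give you this. Finally, your block-decomposition reduction has its own budget leak: a cut-vertex $c$ lying in $k$ blocks could accumulate up to $3k$ $B$-neighbours across the blocks, so you would again need an asymmetric boundary condition (e.g., $c$ is charged $0$ in all but one block), and it is cleaner to avoid the block reduction entirely by triangulating $G$ to a maximal outerplanar graph at the outset, which the paper does.
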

\begin{proof}
If $|V(G)| \leq 1$, then we let $A= V(G)$, and we are done. Otherwise, we assume $|V(G)| \geq 2$.
Since the class of petunias is closed under taking subgraphs, we may add edges to $G$ until $G$ is a maximal outerplanar graph (or equivalently an outerplanar triangulation whenever $|V(G)| \geq 3$), and doing so will not make our task any easier.

We prove the following stronger claim:
\begin{quote}
Let $G$ be a maximal outerplanar graph on at least two vertices, and let $uv$ be an edge of $G$ oriented from $u$ to $v$. Then $V(G)$ can be partitioned into two sets $A$ and $B$ such that the following hold:
\begin{enumerate}
\item \label{lab:uv} $G[A]$ is a petunia containing $uv$;
\item $B$ is an independent set;
\item \label{lab:u} $u$ is not adjacent to any vertex of $B$;
\item \label{lab:v} $v$ is adjacent to at most two vertices of $B$;
\item \label{lab:others} Every other vertex of $G$ is adjacent to at most three vertices of $B$.
\end{enumerate}
\end{quote}
We prove the statement by induction on $|V(G)|$. If $|V(G)| =2$, then again letting $A = V(G)$ satisfies the claim. Now, suppose that $|V(G)| \geq 3$, and let $uv \in E(G)$. We fix an outerplanar drawing of $G$.
Since $G$ is an outerplanar triangulation, it follows that $G$ is $2$-connected, and hence $G$ contains a Hamiltonian cycle $C$ separating the outer face of $G$ from all other faces. 

We begin to construct our sets $A$ and $B$ as follows.
We first add all vertices in $N[u]$ to $A$. Note that $N[u]$ induces a petunia in $G$. Now, let $t = \deg(u)$, and write $w_1$ and $w_t$ for the neighbors of $u$ via $C$, and assume without loss of generality that the neighbors of $u$, in clockwise order, are $w_1, w_2, \dots, w_t$. We observe that any given component $K$ of $G \setminus N[u]$ is separated from the rest of $G$ by two vertices of the form $w_i, w_{i+1}$, for some $1 \leq i \leq t-1$. We show an example of such a component $K$ in Figure \ref{fig:induction}.

Now, if $N[u]$ contains all vertices of $G$, then it is easy to check that we are done. Otherwise, let $K$ be some component of $G \setminus N[u]$, and let $K$ be separated from the rest of $G$ by $w_i, w_{i+1}$. Since $G$ is an outerplanar triangulation, $uw_i$ and $uw_{i+1}$ lie on some triangle $T$ in the interior of $C$, so $w_i$ and $w_{i+1}$ are adjacent. Furthermore, since $w_i$ and $w_{i+1}$ separate some component $K$ from the rest of $G$, it must follow that the edge $w_iw_{i+1}$ does not belong to $C$, and hence $T$ shares the edge $w_i w_{i+1}$ with a second triangle $T'$ in the interior of $C$. The triangle $T'$ includes the vertices $w_i$ and $w_{i+1}$, along with a third vertex $x_i$. We add $x_i$ to $B$, and we add all neighbors of $x_i$ to $A$.

\begin{figure}
  \begin{center}
\begin{tikzpicture}
  [scale=1.5,auto=left,every node/.style={circle,minimum size = 6pt,inner sep=0pt}, decoration={markings, 
    mark= between positions 0.53 and 0.53 step 0.1mm 
        with {\arrow{latex}}}]

\node(z) at (5,4.45) [draw = white, fill = white] {$u$};
\node(z) at (5,3) [draw = white, fill = white] {$T$};
\node(z) at (7,1) [draw = white, fill = white] {$T'$};
\node(z) at (0,-0.25) [draw = white, fill = white] {$w_{i+1} = y_s$};
\node(z) at (2,-0.25) [draw = white, fill = white] {$y_{s-1}$};
\node(z) at (4,-0.25) [draw = white, fill = white] {$\cdots$};
\node(z) at (6,-0.25) [draw = white, fill = white] {$y_{j+1}$};
\node(z) at (7,-0.25) [draw = white, fill = white] {$x_i$};
\node(z) at (8,-0.25) [draw = white, fill = white] {$y_{j}$};
\node(z) at (9,-0.25) [draw = white, fill = white] {$\cdots$};
\node(z) at (10,-0.25) [draw = white, fill = white] {$w_i = y_1$};

\node(u) at (5,4.2) [draw = black, fill = black] {};
\node (wi) at (10,0) [draw = black, fill = black] {};
\node (wi1) at (0,0) [draw = black, fill = black] {};
\node (xi) at (7,0) [draw = black, fill = white] {};
\node (y1) at (2,0) [draw = black, fill = black] {};
\node (y2) at (4,0) [draw = black, fill = black] {};
\node (y3) at (6,0) [draw = black, fill = black] {};
\node (y4) at (8, 0) [draw = black, fill = black] {};
\node (y5) at (9, 0) [draw = black, fill = black] {};

    \draw
      (wi1) to[out=35,in=145,looseness=1.3] (xi) {};
    \draw
      (xi) to[out=40,in=140,looseness=1.5] (wi) {};
    \draw
      (xi) to[out=38,in=142,looseness=1.2] (y5) {};
    \draw
      (wi1) to[out=40,in=140,looseness=1.15] (wi) {};
    \draw[name path = A]
      (wi1) to[out=30,in=150,looseness=1.1] (y1) {};
	\draw[name path =A1]
	(wi1) to[out=0,in=180,looseness=0] (y1) {};
    \draw[name path = B]
      (y1) to[out=30,in=150,looseness=1.1] (y2) {};
    \draw[name path = B1]
      (y1) to[out=0,in=180,looseness=0] (y2) {};
    \draw[name path = C]
      (y2) to[out=30,in=150,looseness=1.1] (y3) {};
    \draw[name path = C1]
      (y2) to[out=0,in=180,looseness=0] (y3) {};
    \draw
      (y2) to[out=32,in=148,looseness=1.2] (xi) {};
    \draw
      (y1) to[out=32,in=148,looseness=1.3] (xi) {};
\foreach \from/\to in {u/wi,wi1/u,y1/y2,y2/y3,y3/xi,xi/y4,y4/wi}
    \draw (\from) -- (\to);

\draw[name path = D]
      (y4) to[out=40,in=140,looseness=1.1] (y5) {};
    \draw[name path = D1]
      (y4) to[out=0,in=180,looseness=0] (y5) {};
\draw[name path = E]
      (y5) to[out=30,in=150,looseness=1.1] (wi) {};
    \draw[name path = E1]
      (y5) to[out=0,in=180,looseness=0] (wi) {};

\tikzfillbetween[of=A and A1]{gray!40};
\tikzfillbetween[of=B and B1]{gray!40};
\tikzfillbetween[of=C and C1]{gray!40};
\tikzfillbetween[of=D and D1]{gray!40};
\tikzfillbetween[of=E and E1]{gray!40};

    \draw[postaction = {decorate}]
      (wi1) to[out=30,in=150,looseness=1.1] (y1) {};
	\draw
	(wi1) to[out=0,in=180,looseness=0] (y1) {};
    \draw[postaction = {decorate}]
      (y1) to[out=30,in=150,looseness=1.1] (y2) {};
    \draw
      (y1) to[out=0,in=180,looseness=0] (y2) {};
    \draw[postaction = {decorate}]
      (y2) to[out=30,in=150,looseness=1.1] (y3) {};
    \draw
      (y2) to[out=0,in=180,looseness=0] (y3) {};

    \draw[postaction = {decorate}]
      (wi) to[out=150,in=30,looseness=1.1] (y5) {};
	\draw
	(wi) to[out=0,in=180,looseness=0] (y5) {};
    \draw[postaction = {decorate}]
      (y5) to[out=140,in=40,looseness=1.1] (y4) {};
	\draw
	(y5) to[out=0,in=180,looseness=0] (y4) {};

\node (z) at (0.5,0) [draw = black,fill=gray!25] {};
\node (z) at (1,0) [draw = black,fill=gray!25] {};
\node (z) at (1.5,0) [draw = black,fill=gray!25] {};

\node (z) at (2.5,0) [draw = black,fill=gray!25] {};
\node (z) at (3,0) [draw = black,fill=gray!25] {};
\node (z) at (3.5,0) [draw = black,fill=gray!25] {};

\node (z) at (4.5,0) [draw = black,fill=gray!25] {};
\node (z) at (5,0) [draw = black,fill=gray!25] {};
\node (z) at (5.5,0) [draw = black,fill=gray!25] {};

\node (z) at (8.5,0) [draw = black,fill=gray!25] {};
\node (z) at (9.5,0) [draw = black,fill=gray!25] {};

\end{tikzpicture}
\end{center}
\caption{The figure shows part of the outerplanar graph $G$ from the proof of Lemma \ref{lem:petuniaOP}. The black vertices in the figure belong to the set $A$, and the white vertex belongs to the set $B$. Each shaded region represents some outerplanar subgraph of $G$. We may partition $V(G)$ as described in Lemma \ref{lem:petuniaOP} by applying the induction hypothesis to each of the outerplanar graphs shown as a shaded region in the figure.}
\label{fig:induction}
\end{figure}
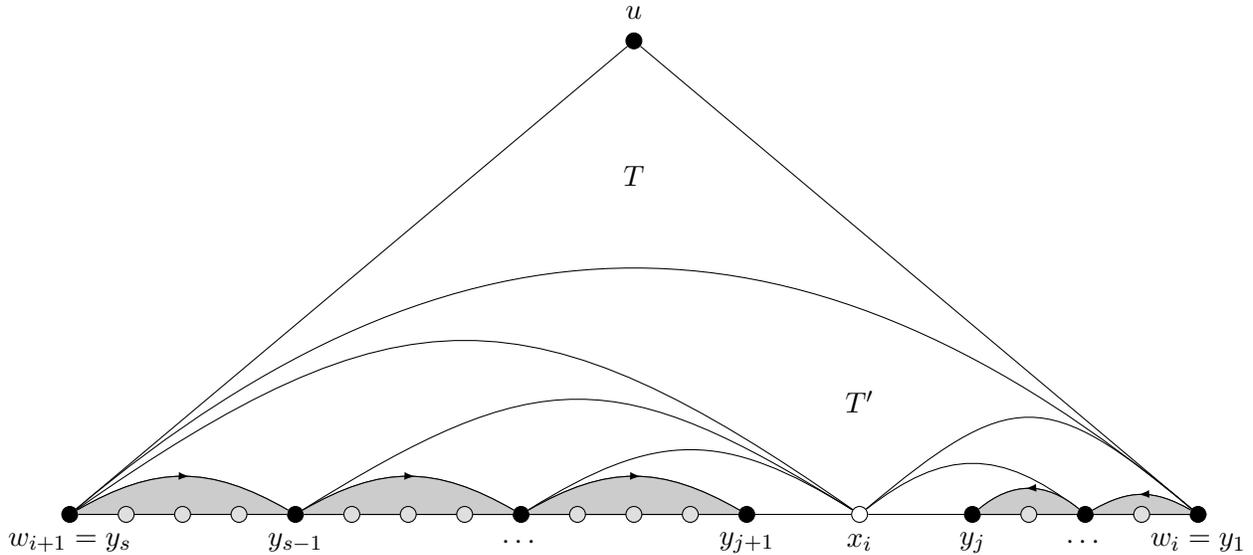

Now, let the neighbors of $x_i$, in clockwise order, be $y_1, \dots, y_s$, with $y_1 = w_i$ and $y_s = w_{i+1}$. Let $y_j$ and $y_{j+1}$ be neighbors of $x_i$ along $C$. Since $G$ is an outerplanar triangulation, it follows that $G$ contains the edge $y_{\alpha}y_{\alpha+1}$ for $1 \leq \alpha \leq j-1$, as well as for $j + 1 \leq \alpha \leq s-1$. Now, for $1 \leq \alpha \leq j-1$, we orient the edge $y_{\alpha} y_{\alpha + 1}$ from $y_{\alpha}$ to $y_{\alpha + 1}$. Then, for $j + 1 \leq \alpha \leq s-1$, we orient the edge $y_{\alpha} y_{\alpha + 1}$ from $y_{\alpha + 1}$ to $y_{\alpha}$. Finally, for each value $\alpha \in [s-1] \setminus j$, we apply the induction hypothesis to the outerplanar subgraph of $G$ that is either $2$-connected or isomorphic to $K_2$ whose outer facial walk is given by the edge $y_{\alpha} y_{\alpha + 1}$ along with the path from $y_{\alpha}$ to $y_{\alpha + 1}$ along $C$. By induction, all vertices of $G$ are partitioned into the sets $A$ and $B$.

We claim that all criteria of the induction statement are satisfied. First, we must show that $G[A]$ is a petunia containing $uv$. By construction, $A$ contains $u$ and $v$. Also, clearly $N[u]$ is a petunia, as $G$ is outerplanar. Furthermore, each vertex $y_{\alpha}$ described in the process above must be a cut-vertex in $G[A]$, so $G[A]$ remains a petunia even after adding vertices using the induction hypothesis. Second, clearly no pair $x_i, x_j$ is adjacent, so $B$ is initially an independent set. Furthermore, as all neighbors of each $x_i$ are added to $A$, $B$ remains an independent set even after applying the induction hypothesis. Third, by construction, $u$ is not adjacent to any vertex of $B$. Fourth, if $v = w_i$, then as $G$ is outerplanar, $v$ is initially adjacent to at most two vertices of $B$, namely $x_i$ and $x_{i-1}$. Furthermore, each vertex $w_i$ is the tail of an arc in all outerplanar graphs containing $w_i$ for which the induction hypothesis is applied, and thus $w_i$ does not gain any neighbors in $B$ after applying induction because of criterion (\ref{lab:u}). 

Finally, we argue that each vertex $z \in A$ is adjacent to at most three vertices of $B$. If $z$ is of the form $w_i$ or $u$, then (\ref{lab:others}) holds for $z$. Otherwise, if $z$ is of the form $y_{\alpha}$ in the process described above, then $z$ belongs to at most two outerplanar graphs $H, H'$ for which the induction hypothesis is called, and $z$ belongs to an arc of both $H$ and $H'$. Furthermore, $z$ is the head of at most one of these arcs. Therefore, by criteria (\ref{lab:u}) and (\ref{lab:v}), $z$ gains at most two neighbors in $B$ during induction. As we have assumed that $z$ is not of the form $u$ or $w_i$, it follows that $z$ is adjacent to at most one vertex of the form $x_i$. Therefore, $z$ has at most three neighbors in $B$. Finally, if none of the above holds, then $z$ belongs to an outerplanar graph $H$ on which induction is applied, and $z$ is separated from all vertices $x_i$ by some edge $y_{\alpha} y_{\alpha + 1}$. Therefore, by criterion (\ref{lab:others}), $z$ has at most three neighbors in $B$. Thus the induction statement holds, and the proof is complete.
\end{proof}

With Lemma \ref{lem:petuniaOP} proven, we are ready to apply Lemma \ref{lem:partition} and Theorem \ref{thm:petunia} to obtain an upper bound for the hat guessing number of outerplanar graphs. Letting $s = 1$ in the following theorem immediately implies Theorem \ref{thm:OP_intro}.

\begin{theorem}
\label{thm:OP}
If $G$ is an outerplanar graph, then 
\[ \HG_s(G) \leq  (3(s+1)^6+3(s+1)^3 + 3 )^{3((s+1)^6+(s+1)^3+1)^2}. \]
\end{theorem}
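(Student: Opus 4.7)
The plan is to combine the three results just established—Lemma \ref{lem:petuniaOP}, Lemma \ref{lem:partition}, and Theorem \ref{thm:petunia}—into a direct chain of inequalities. Since all the heavy lifting already lives in the auxiliary lemmas, the argument for Theorem \ref{thm:OP} itself should be essentially a substitution; the only thing we need to produce from scratch is a bound on the reduction parameter $s'$ coming out of Lemma \ref{lem:partition}.

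First I would apply Lemma \ref{lem:petuniaOP} to the outerplanar graph $G$, obtaining a partition $V(G) = A \cup B$ in which $G[A]$ is a petunia, $B$ is an independent set, and each vertex of $A$ has at most three neighbors in $B$. Next, because $G[B]$ has no edges, every player in $G[B]$ sees no neighbors and must therefore guess from a fixed set of at most $s$ colors; with any color palette of size strictly greater than $s$ the adversary can pick a hat outside each player's guess set, so
\[\HG_s(G[B]) \leq s.\]

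Then I would invoke Lemma \ref{lem:partition} with $d = 3$. This yields $\HG_s(G) \leq \HG_{s'}(G[A])$ for
\[s' = s\bigl(\HG_s(G[B]) + 1\bigr)^3 \leq s(s+1)^3.\]
Because $G[A]$ is a petunia, Theorem \ref{thm:petunia} then gives
\[\HG_{s'}(G[A]) \leq (3(s')^2 + 3s' + 3)^{3((s')^2 + s' + 1)^2}.\]
Substituting the upper bound on $s'$ into the right-hand side produces the stated double-exponential expression. (In particular, specializing to $s=1$ gives $s' \leq 8 = (s+1)^3$, which yields $\HG(G) \leq 219^{3 \cdot 73^2} < 2^{125000}$ and thus recovers Theorem \ref{thm:OP_intro}.)

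The main obstacle for this theorem is not here but in the supporting decomposition: once one has Lemma \ref{lem:petuniaOP} in hand, the proof of Theorem \ref{thm:OP} is a clean assembly step whose only delicate aspect is the bookkeeping of the exponent $s'$ passed from Lemma \ref{lem:partition} into Theorem \ref{thm:petunia}. All remaining work is an arithmetic chase through the two substitutions.
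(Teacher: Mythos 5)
Your proposal matches the paper's proof exactly in structure: apply Lemma~\ref{lem:petuniaOP}, feed the resulting partition into Lemma~\ref{lem:partition}, then finish with Theorem~\ref{thm:petunia}. That is the right chain, and the observation that $B$ being independent gives $\HG_s(G[B]) \leq s$ (in fact $=s$) is also what the paper uses.

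There is, however, one arithmetic slip worth flagging, and interestingly it exposes an inconsistency in the paper itself. You correctly read off from Lemma~\ref{lem:partition} that
\[
s' \;=\; s\bigl(\HG_s(G[B])+1\bigr)^3 \;=\; s(s+1)^3,
\]
but then you assert that substituting $s' = s(s+1)^3$ into the petunia bound $(3(s')^2+3s'+3)^{3((s')^2+s'+1)^2}$ ``produces the stated double-exponential expression.'' It does not: the substitution yields
\[
\bigl(3s^2(s+1)^6+3s(s+1)^3+3\bigr)^{3\left(s^2(s+1)^6+s(s+1)^3+1\right)^2},
\]
whereas the theorem as stated has no leading $s$ and $s^2$ factors; those agree only when $s=1$. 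The paper's own proof quietly sets $s' = (s+1)^3$, dropping the factor of $s$ that Lemma~\ref{lem:partition} requires, which is what makes the stated formula come out --- so as written, the theorem statement is slightly stronger than what either derivation actually delivers for $s>1$. If you use the correct $s' = s(s+1)^3$, the theorem statement should be weakened accordingly (this changes nothing at $s=1$, where $s' = 8$ in both readings, and in Theorem~\ref{thm:planar} the estimates are so generous that they absorb the extra factor). The remedy is straightforward, but you should not claim the substitution reproduces the stated expression for general $s$, because it does not.
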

\begin{proof}
We partition $G$ using Lemma \ref{lem:petuniaOP} so that $G[A]$ is a petunia, $B$ is an independent set, and each vertex of $A$ has at most three neighbors in $B$. Since $\HG_s(G[B]) = s$, we follow Lemma \ref{lem:partition} and set $d =3$ and $s' = (s+1)^3$. Then, according to Lemma \ref{lem:partition} and Theorem \ref{thm:petunia}, 
\[ \HG_s(G) \leq \HG_{(s+1)^3}(G[A])  \leq (3(s+1)^6+3(s+1)^3 + 3 )^{3((s+1)^6+(s+1)^3+1)^2}.\]
\end{proof}

\section{Layered planar graphs}
\label{sec:planar}
In this section, we compute an upper bound for the hat guessing number of layered planar graphs. Formally, we define layered planar graphs as follows.
\begin{definition}
Consider a planar graph $H$ obtained from the following process. We begin with a $2$-connected outerplanar graph $G_1$ embedded in the plane so that the unbounded face is incident to all vertices of $G_1$. Then, we choose some integer $\tau \geq 1$, and for each $2 \leq i \leq \tau$, we draw a $2$-connected outerplanar $G_i$ inside some interior face of $G_{i-1}$ so that in the drawing of $G_i$, the unbounded face contains all vertices of $G_{i}$. Then, we add some set of edges between $G_{i-1}$ and $G_i$ in such a way that does not introduce a crossing. If $G$ is a subgraph of a graph $H$ constructed in this way, then we say that $G$ is a \emph{layered planar graph.}
\end{definition}

We obtain the following upper bound for the hat guessing number of layered planar graphs. Letting $s = 1$ in the following theorem immediately implies Theorem \ref{thm:planar_intro}.

\begin{theorem}
\label{thm:planar}
If $G$ is a layered planar graph, then $\log_2 \log_2 \log_2 \log_2 \HG_s(G) <  2^{149}s^{35} $.
\end{theorem}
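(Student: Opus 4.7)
The plan is to generalize the strategy of Theorem \ref{thm:OP} to the layered setting, decomposing the graph into structured pieces and iteratively applying the tools from Section \ref{sec:Tools}. In broad strokes, the outerplanar case reduced $G$ to a petunia via Lemma \ref{lem:petuniaOP}, bounded the petunia via a forest decomposition and Theorem \ref{thm:tree} (giving Theorem \ref{thm:petunia}), and combined the pieces via Lemma \ref{lem:partition}. I would carry out an analogous multi-tier reduction tailored to the layered structure: layered planar $\to$ ``layered petunia'' $\to$ layer-wise forests $\to$ base case, with each arrow implemented using one of the tools of Section \ref{sec:Tools}.

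Concretely, I would first aim to establish an analogue of Lemma \ref{lem:petuniaOP} for layered planar graphs. A natural first step is to apply Lemma \ref{lem:petuniaOP} layer-by-layer to each outerplanar $G_i$, producing $A = \bigcup_i A_i$ and $B = \bigcup_i B_i$ with each $G_i[A_i]$ a petunia and each $B_i$ independent in $G_i$, while preserving the within-layer $3$-neighbor bound between $A_i$ and $B_i$. I would then bound $\HG_s(G[A])$ for the resulting ``layered petunia''. Generalizing Lemma \ref{lem:petunia_partition}, the idea is to partition $G[A]$ into forests with tree-like quotient and bounded cross-part interactions, possibly with a larger value of $r$ to accommodate inter-layer edges, and apply Theorem \ref{thm:tree} to obtain the bound. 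Finally, I would apply Lemma \ref{lem:partition} (or its multi-part generalization Lemma \ref{lem:gen_partition}) to combine the bounds on $G[A]$ and $G[B]$ into an explicit upper bound on $\HG_s(G)$; setting $s=1$ recovers Theorem \ref{thm:planar_intro}.

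The principal obstacle is that the $3$-neighbor property of Lemma \ref{lem:petuniaOP} holds only within each layer: a single vertex of layer $i$ may be adjacent to unboundedly many vertices of layer $i+1$, so the degree condition needed for a direct application of Lemma \ref{lem:partition} across layers fails. Overcoming this requires exploiting the planar, non-crossing structure of the inter-layer edges, which live in the annular region between the outer Hamiltonian cycles of $G_i$ and $G_{i+1}$ and form a restricted ``non-crossing'' bipartite graph. I would try to decompose this bipartite structure further, either by refining the partition into additional parts indexed by cyclic position along each Hamiltonian cycle, or by iterating Lemma \ref{lem:partition} through the layers while carefully controlling the growth of the parameter $s$. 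Each such iteration introduces an additional exponential factor, and the accumulation of these factors accounts for the tower-of-four growth rate in the conclusion, as opposed to the tower-of-two growth rate obtained in the outerplanar case.
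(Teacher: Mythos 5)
Your proposal correctly identifies the structure of the argument (decompose, apply Lemma~\ref{lem:gen_partition}, fall back on the outerplanar/petunia bounds) and correctly names the central obstacle: a vertex in $L_i$ may have unboundedly many neighbors in $L_{i\pm 1}$, so no naive two-part or layer-by-layer application of Lemma~\ref{lem:partition} gives a bounded degree parameter. But you do not resolve this obstacle, and the two fixes you float both fail or remain unspecified. Iterating Lemma~\ref{lem:partition} through the layers cannot work on its own: the number of levels is unbounded, so the ``$s$'' parameter would grow without bound, and in any case the degree bound needed at each application still fails because a vertex can have arbitrarily many parents and arbitrarily many children. The ``refine by cyclic position'' idea is not developed to the point where one can check it, and as stated it does not produce a partition into a \emph{fixed} number of parts with bounded cross-part degrees, which is exactly what Lemma~\ref{lem:gen_partition} requires.

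What the paper actually does, and what is missing from your sketch, is a specific five-color partition that exploits the parent/child structure to tame both the many-parents and many-children problems simultaneously. For each $v \in L_i$ with parents $u_1,\dots,u_t$ in clockwise order, the paper colors the ``interior'' parents $u_2,\dots,u_{t-1}$ (and the whole interval they separate) \emph{green}; crucially, by planarity each green vertex then has only $v$ as a child, so green vertices have at most five non-green neighbors and induce an outerplanar graph. Vertices with at least three children are colored \emph{red} (and shown to induce a petunia); the extremal red children of red vertices become \emph{pink} (inducing bounded degree); remaining vertices are split into \emph{blue} and \emph{indigo} by parity of level, each inducing an outerplanar graph. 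Every part has at most six neighbors in the union of later parts, so Lemma~\ref{lem:gen_partition} applies with $k=5$, using Lemma~\ref{lem:LLL} for pink, Theorem~\ref{thm:petunia} for red, and Theorem~\ref{thm:OP} for green, blue, indigo. This coloring scheme is the genuinely new idea of the proof, and nothing in your proposal supplies it or an equivalent; you have a correct high-level plan but a gap precisely at the step you yourself flag as the main difficulty.
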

\begin{proof}
We fix a drawing of $G$ in the plane. We partition the vertices of $G$ into \emph{levels} $L_i$ as follows. First, we let $L_1$ denote the set of vertices on the outer face of $G$. Then, for $i \geq 1$, we let $L_{i+1}$ denote the set of vertices on the outer face of $G \setminus (L_1 \cup \dots \cup L_i)$. 
Since $G$ is a layered planar graph, we may assume that $G_i$ is a $2$-connected outerplanar graph for each level $L_i$, and that every edge of $G$ either has both endpoints in some $L_i$ or has one endpoint in some $L_i$ and the other endpoint in $L_{i+1}$. If a vertex $v \in L_i$ has a neighbor $u$, then we say that $u$ is a \emph{parent} of $v$ if $u \in L_{i-1}$, a \emph{sibling} of $v$ if $u \in L_i$, and a \emph{child} of $v$ if $u \in L_{i+1}$.

Now, we will begin to partition the vertices of $G$ into color classes. Initially, we let every vertex of $G$ be colored \texttt{blank}. Then, for each vertex $v$ in each level $L_i$, let $u_1, \dots, u_t$ be the parents of $v$ in clockwise order. 
We let $K_v$ denote the (possibly empty) subgraph of $G_{i-1}$ that is separated from the rest of $G_{i-1}$ by $u_1$ and $u_t$ and that can be reached by travelling from $v$ to $u_1$ and then turning right. In other words, $K_v$ is on the ``clockwise side" of the arc $v u_1$ and the ``counterclockwise side" of $v u_t$, and if $K_v$ is nonempty, it contains the vertices $u_2, \dots, u_{t-1}$. We color every vertex of $K_v$ green, as shown in Figure \ref{fig:green}. Observe that by planarity, since $G_i$ and $G_{i-1}$ are $2$-connected, each vertex of $K_v$ can only have $v$ as a child.

\begin{figure}
\begin{center}
\begin{tikzpicture}
[scale=1.2,auto=left,every node/.style={circle,fill=gray!30,minimum size = 6pt,inner sep=0pt}]

\node(z) at (0,-0.35) [draw = white, fill = white] {$v$};
\node(z) at (-3,1.35) [draw = white, fill = white] {$u_1$};
\node(z) at (3,1.35) [draw = white, fill = white] {$u_t$};

\node(z) at (-4.5,0) [draw = white, fill = white] {$L_i$};
\node(z) at (-4.5,1) [draw = white, fill = white] {$L_{i-1}$};

\draw[thick] (-4,0) -- (4,0);
\draw[thick] (-4,1) -- (4,1);
\node(v) at (0,0) [draw = black] {};
\node(u1) at (-3,1) [draw = black] {};
\node(u2) at (-2,1) [draw = black, fill = ForestGreen] {};
\node(u3) at (-1,1) [draw = black, fill = ForestGreen] {};
\node(u4) at (-0,1) [draw = black, fill = ForestGreen] {};
\node(u5) at (1,1) [draw = black, fill = ForestGreen ]{};
\node(u6) at (2,1) [draw = black, fill = ForestGreen] {};
\node(u7) at (3,1) [draw = black] {};
\draw (u3) to [out=-30,in=-160] (u6);
\draw (u3) to [out=-20,in=-170] (u5);

\foreach \from/\to in {v/u1,v/u3,v/u6,v/u7,v/u2}
    \draw (\from) -- (\to);

\end{tikzpicture}
\end{center}
\caption{The figure shows a vertex $v \in L_i$ with $u_1$ as its counterclockwise-most parent and with $u_t$ as its clockwise-most parent. We use green to color all vertices in $L_{i-1}$ on the clockwise side of $v u_1$ and on the counterclockwise side of $v u_t$.}
\label{fig:green}
\end{figure}
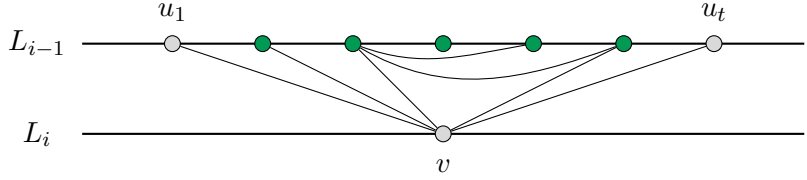



Next, for every vertex $v \in V(G)$, if $v$ has at least three children, then we color $v$ red. Since each green vertex has only one child, no vertex will be colored both green and red. Then, if a red vertex $v$ has at least one red child, then we use pink to recolor the clockwise-most and counterclockwise-most red child of $v$, as shown in Figure \ref{fig:red}. Finally, we use blue to color all remaining \texttt{blank} vertices in a level $L_i$ with $i$ even, and we use indigo to color all remaining \texttt{blank} vertices in a level $L_i$ with $i$ odd.

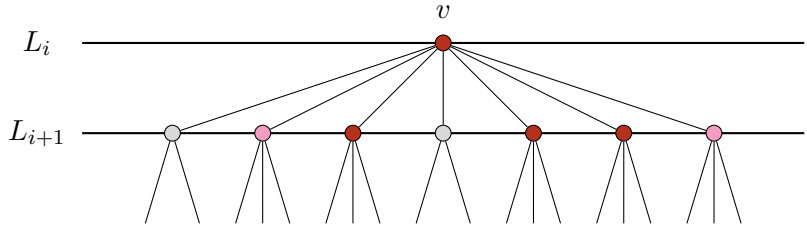
\begin{figure}
\begin{center}
\begin{tikzpicture}
[scale=1.2,auto=left,every node/.style={circle,fill=gray!30,minimum size = 6pt,inner sep=0pt}]

\node(z) at (0,2.35) [draw = white, fill = white] {$v$};

\node(z) at (-4.5,2) [draw = white, fill = white] {$L_i$};
\node(z) at (-4.5,1) [draw = white, fill = white] {$L_{i+1}$};

\draw[thick] (-4,2) -- (4,2);
\draw[thick] (-4,1) -- (4,1);
\node(v) at (0,2) [draw = black, fill = BrickRed] {};
\node(u1) at (-3,1) [draw = black] {};
\draw (u1) -- (-3.3,0);
\draw (u1) -- (-2.7,0);
\node(u2) at (-2,1) [draw = black, fill =  Lavender] {};
\draw (u2) -- (-2.3,0);
\draw (u2) -- (-2,0);
\draw (u2) -- (-1.7,0);
\node(u3) at (-1,1) [draw = black, fill =  BrickRed] {};
\draw (u3) -- (-1.3,0);
\draw (u3) -- (-1,0);
\draw (u3) -- (-0.7,0);
\node(u4) at (-0,1) [draw = black] {};
\draw (u4) -- (-0.3,0);
\draw (u4) -- (0.3,0);
\node(u5) at (1,1) [draw = black, fill = BrickRed]{};
\draw (u5) -- (1.3,0);
\draw (u5) -- (1,0);
\draw (u5) -- (0.7,0);
\node(u6) at (2,1) [draw = black, fill =  BrickRed] {};
\draw (u6) -- (2.3,0);
\draw (u6) -- (2,0);
\draw (u6) -- (1.7,0);
\node(u7) at (3,1) [draw = black, fill = Lavender] {};
\draw (u7) -- (3.3,0);
\draw (u7) -- (3,0);
\draw (u7) -- (2.7,0);

\foreach \from/\to in {v/u1,v/u3,v/u4,v/u5,v/u6,v/u7,v/u2}
    \draw (\from) -- (\to);

\end{tikzpicture}
\end{center}
\caption{The figure shows a red vertex $v \in L_i$ with several red children. We use pink to recolor the clockwise-most and counterclockwise-most red children of $v$.}
\label{fig:red}
\end{figure}


Now, we make a series of claims about our coloring of $G$. The ultimate goal of these claims will be to show that we can apply Lemma \ref{lem:gen_partition} to obtain an upper bound for $\HG_s(G)$.

\begin{claim}
The green vertices of $G$ induce an outerplanar graph, and every green vertex has at most five neighbors in a color other than green.
\end{claim}
\begin{proof}
We first show that each green vertex has at most five non-green neighbors. Let $v$ be a green vertex. Since all parents of $v$ except for the clockwise-most and counterclockwise-most parent are colored green, $v$ has at most two non-green parents. If $v \in L_i$, then by construction, $v$ belongs to a connected green subgraph in $G_i$ that is separated from the rest of $G_i$ by two non-green vertices. Therefore, $v$ also has at most two non-green siblings. Finally we have observed previously that $v$ has at most one child. Therefore, $v$ has at most five non-green neighbors.

Now, we show that the green vertices of $G$ induce an outerplanar graph. Since each $G_i$ is an outerplanar graph, clearly the green vertices in any single level $L_i$ induce an outerplanar graph. Furthermore, by construction, for any two distinct green vertices $u,v \in L_i$, no green parent of $u$ is equal to or adjacent to a green parent of $v$, and hence no ancestor of $u$ in the green induced subgraph is equal to or adjacent to an ancestor of $v$ in the green induced subgraph. Therefore, the green induced subgraph of $G$ is a graph in which each block is an outerplanar graph contained in some level $L_i$, and hence the green induced subgraph of $G$ is outerplanar.
\end{proof}
\begin{claim}
\label{claim:blue}
The blue vertices of $G$ induce an outerplanar graph, and every blue vertex has at most six neighbors in indigo, red, or pink.
\end{claim}
\begin{proof}
Clearly the blue vertices induce an outerplanar graph, as they induce a subgraph of the disjoint union of the outerplanar graphs $G_i$ for even values $i$. 

Now, let $v \in L_i$ be a blue vertex. As $v$ is not red or pink, $v$ has at most two indigo, red, or pink children. As all but at most two parents of $v$ are colored green, $v$ has at most two indigo, red, or pink parents. We also observe that by construction, $v$ has no indigo sibling. 

Finally, we argue that $v$ has at most two red or pink siblings. If $G_{i+1}$ is empty, then clearly $v$ does not have a red or pink sibling, so we assume that $G_{i+1}$ is a $2$-connected outerplanar graph.
Since $G_i$ is an outerplanar graph, $G_i$ has a Hamiltonian cycle $C_i$ such that $E(C_i)$ and the interior of $E(C_i)$ contain all edges of $G_i$. Suppose that $v$ has at least three red or pink siblings. Assume that when starting outside $C_i$ and then visiting the edges incident to $v$ in clockwise order, we visit edges incident to three red or pink siblings $u_1, u_2, u_3$, in order. Observe that since $G_i$ is outerplanar, the edge $v u_2$ separates the interior of $C_i$ into two regions, one containing $u_1$ and one containing $u_3$. However, since $G_{i+1}$ is connected, the edge $v u_2$ separates one of $u_1, u_3$ from $G_{i+1}$, contradicting the assumption that both of these vertices have neighbors in $G_{i+1}$. Therefore, $v$ has at most two red or pink siblings.
\end{proof}
\begin{claim}
The indigo vertices of $G$ induce an outerplanar graph, and every indigo vertex has at most six neighbors in red or pink.
\end{claim}
\begin{proof}
The proof is similar to that of Claim \ref{claim:blue}.
\end{proof}
\begin{claim}
\label{claim:red}
The red vertices in $G$ induce a petunia, and every red vertex has at most six pink neighbors.
\end{claim}
\begin{proof}
In our proof, we will often use the fact that for each value $1 \leq i \leq \tau $, the subgraph of $G_i$ induced by those vertices with at least one child is a subgraph of a cycle. This fact follows easily from planarity.

To show that the red vertices in $G$ induce a petunia, we first claim that if two distinct red vertices $u,v$ in a common level $L_i$
have respective red children $u'$ and $v'$, then $u'$ and $v'$ are not joined by a path of red vertices in $L_{i+1}$. 
Indeed, $u$ has two pink children $w, w' \in L_{i+1}$ with respective children $x,x' \in L_{i+2}$, and the edges $wx$ and $w'x'$ along with $G_{i+2}$ separate $u'$ from $v'$. Thus, the claim holds, and by a similar argument, $u'$ and $v'$ cannot be equal.

Next, consider the subgraph $G'$ of $G$ induced by the red vertices of $G$, and consider a $2$-connected subgraph $H$ of $G'$. If all vertices of $H$ belong to a single level $L_i$, then $H$ is a subgraph of a cycle and hence a petal graph. Otherwise, by the previous observation, for each vertex $v \in V(G')$, $v$ separates the descendants of $v$ in $G'$ from all other vertices in $G'$. Therefore, if two vertices of $H$ belong to a level $L_i$, then as $H$ is $2$-connected, no vertex of $H$ can belong to $L_{i+1}$. Therefore, it must follow that $H$ contains exactly one vertex in some level $L_i$ and that all other vertices of $H$ belong to $L_{i+1}$. As seen before, the red and pink vertices in $L_{i+1}$ induce a subgraph of a cycle, and hence the red vertices of $H$ in $L_{i+1}$ induce a subgraph of a path. Therefore, $H$ is a petal graph, and $G'$ is a petunia.

Now, let $v$ be a red vertex. As argued before, $v$ has at most two non-green parents. By the same argument used in Claim \ref{claim:blue}, $v$ has at most two pink siblings. Finally, if $v \in G_i$, then since $G_{i+1}$ is $2$-connected and outerplanar, $v$ has at most two pink children.
\end{proof}
\begin{claim}
The pink vertices in $G$ induce a graph of maximum degree $6$.
\end{claim}
\begin{proof}
The proof is similar to that of Claim \ref{claim:red}.
\end{proof}

Now, we are ready to apply Lemma \ref{lem:gen_partition} and obtain an upper bound for the hat guessing number of planar graphs. Following Lemma \ref{lem:gen_partition}, we let $V_1$ denote the green vertices of $G$, $V_2$ the blue vertices, $V_3$ the indigo vertices, $V_4$ the red vertices, and $V_5$ the pink vertices. Then, we define the following values:
\begin{eqnarray*}
\ell_5 & = & 20s \\
s_4 & =&s \ell_5^6 = (20)^6 s^7 \\
\ell_4 & = & (3s_4^2 + 3s_4 + 3)^{3(s_4^2+s_4+1)^2} + 1< 2^{2^{138}s^{30}} \\
s_3 & = &s \ell_4^6  < 2^{2^{141}s^{35}} \\
\ell_3 & = & (3(s_3+1)^6+3(s_3+1)^3 + 3 )^{3((s_3+1)^6+(s_3+1)^3+1)^2}+1< 2^{2^{2^{145}s^{35}}} \\
s_2 & = &s \ell_3^6  <2^{2^{2^{146}s^{35}}} \\
\ell_2 & = &   (3(s_2+1)^6+3(s_2+1)^3 + 3 )^{3((s_2+1)^6+(s_2+1)^3+1)^2} +1< 2^{2^{2^{2^{147}s^{35}}}} \\
s_1 & = &s \ell_2^5 < 2^{2^{2^{2^{148}s^{35}}}} \\
\ell_1 & = &   (3(s_1+1)^6+3(s_1+1)^3 + 3 )^{3((s_1+1)^6+(s_1+1)^3+1)^2}+1< 2^{2^{2^{2^{2^{149}s^{35}}}}} \\
\end{eqnarray*}
We verify these estimates in the appendix. Since the upper bound given by this method is probably too large, we make no real effort to optimize our estimates.

Now, we must check that all of the hypotheses of Lemma \ref{lem:gen_partition} hold. It is easy to check from our claims that we have given appropriate definitions to each value $s_i$. (In fact, we may overestimate the values of $s_1, s_2, s_3$, but this is fine.) Then, we show that each $\ell_i$ is large enough as follows.

As the pink vertices induce a subgraph of maximum degree at most $6$, it follows from Lemma \ref{lem:LLL} that $\HG_s(G[V_5]) < 20s$. As $G[V_4]$ is a petunia, $\HG_{s_4}(G[V_4]) < \ell_4$ by Theorem \ref{thm:petunia}. Finally, as $G[V_i]$ is outerplanar for $i \in \{1,2,3\}$, $\HG_{s_i}(G[V_i]) < \ell_i$ for $i \in \{1,2,3\}$ by Theorem \ref{thm:OP}. Therefore, $\HG_s(G) < \ell_1$, and the proof is complete.
\end{proof}

\section{Graphs of bounded genus}
\label{sec:genus}
While it is still unknown whether the hat guessing number of planar graphs is bounded, it is straightforward to show that if $\HG_s(H)$ is bounded for every planar graph $H$, then $\HG_s(G)$ is also bounded for every graph $G$ of bounded genus. We will use the following lemma of Mohar and Thomassen, which follows from first principles of algebraic topology. For a graph $G$ embedded on a surface $S$, we say that a cycle $C$ in $G$ is \emph{separating} if $S \setminus C$ has at least two connected components, or equivalently, if $C$ is zero-homologous.

\begin{lemma}[\cite{MoharThomassen}]
\label{lem:cycle}
Let $G$ be a graph embedded on a surface, and let $x$ and $y$ be two distinct vertices of $G$. If $P_1$, $P_2$, and $P_3$ are distinct internally disjoint paths with endpoints $x$ and $y$, and if $P_1 \cup P_2$ and $P_1 \cup P_3$ are both separating cycles, then $P_2 \cup P_3$ is also a separating cycle.
\end{lemma}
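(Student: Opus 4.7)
The plan is to work in $H_1(S; \mathbb{Z}/2)$ and invoke the classical fact that a cycle $C$ of a graph embedded on a closed surface $S$ is separating if and only if $[C] = 0$ in $H_1(S; \mathbb{Z}/2)$. This equivalence is essentially Poincar\'e--Lefschetz duality on a closed surface, and since we use $\mathbb{Z}/2$ coefficients no orientability hypothesis is needed. Under this dictionary, the lemma reduces to a trivial identity in $\mathbb{Z}/2$-linear algebra.

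With this in hand, I would identify each of the three cycles $P_1 \cup P_2$, $P_1 \cup P_3$, $P_2 \cup P_3$ with the $1$-chain in $C_1(S; \mathbb{Z}/2)$ given by the formal sum of its edges; internal disjointness of the paths $P_i$ guarantees that no edge appears twice in any such union. Each $P_i$ is a path from $x$ to $y$ and so has boundary $\{x\} + \{y\}$, which is why the mod-$2$ sum of any two of them is actually a $1$-cycle. The only real computation is the chain-level identity
\[
(P_1 + P_2) + (P_1 + P_3) \;=\; 2\,P_1 + P_2 + P_3 \;=\; P_2 + P_3,
\]
where the $2\,P_1$ term vanishes because we work mod $2$. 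Passing to homology classes, this gives $[P_2 \cup P_3] = [P_1 \cup P_2] + [P_1 \cup P_3] = 0 + 0 = 0$ by hypothesis, so $P_2 \cup P_3$ is null-homologous and therefore separating.

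The argument has essentially no hard step: once the homological viewpoint is adopted, the result is a one-line mod-$2$ cancellation. The only point to justify carefully is the equivalence between the combinatorial notion of a separating cycle (the complement of $C$ in $S$ having more than one component) and the topological notion of being null-homologous in $H_1(S; \mathbb{Z}/2)$. This equivalence is precisely the ``first principles of algebraic topology'' fact that the lemma statement alludes to, and it is exactly what is proved in Mohar--Thomassen for graphs embedded on closed surfaces, so the citation can be inherited without further work.
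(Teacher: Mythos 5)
Your proof is correct. The paper does not supply a proof of this lemma --- it is stated with a citation to Mohar and Thomassen --- but the paper's preamble already records the equivalence between ``separating'' and ``zero-homologous'', which is exactly the homological dictionary you invoke, and your mod-$2$ cancellation of $P_1$ is the standard, essentially one-line argument under that dictionary.
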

This lemma will allow us to use a straightforward inductive argument to prove the following theorem.
\begin{theorem}
\label{thm:genus}
If $f$ is a function such that every planar graph $H$ satisfies $\HG_s(H) < f(s)$, then every graph $G$ of genus $g$ satisfies $\HG_s(G) < f(3^{2(6^g-1)}s^{6^g})$.
\end{theorem}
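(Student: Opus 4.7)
The plan is to induct on the genus $g$. The base case $g = 0$ is immediate from the hypothesis on $f$. For $g \ge 1$, I fix an embedding of $G$ on a surface of genus $g$, let $C$ be a shortest non-separating cycle of $G$, and set $B = V(C)$, $A = V(G) \setminus V(C)$. The three structural properties I will verify are: (i) $C$ has no chord, so $G[B] = C$ has maximum degree $2$; (ii) every vertex of $A$ has at most $5$ neighbors in $B$; (iii) $G[A]$ embeds on a surface of genus at most $g - 1$, which is the classical topological fact that cutting a non-separating simple closed curve lowers the genus. Given (i)--(iii), Lemma \ref{lem:LLL} gives $\HG_s(G[B]) + 1 \le 9s$ for all $s \ge 1$, and Lemma \ref{lem:partition} with $d = 5$ yields $\HG_s(G) \le \HG_{s'}(G[A])$ for $s' = s(9s)^5 = 3^{10} s^6$; the inductive hypothesis applied to $G[A]$ gives
\[
\HG_s(G) < f\!\left(3^{2(6^{g-1}-1)} (s')^{6^{g-1}}\right) = f\!\left(3^{2(6^g-1)} s^{6^g}\right)
\]
after combining exponents.

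Property (i) falls out of Lemma \ref{lem:cycle}: a putative chord $u_1u_2$ of $C$ splits $C$ into two arcs $\alpha_1, \alpha_2$ of length at least $2$, so the three internally disjoint paths $u_1u_2, \alpha_1, \alpha_2$ from $u_1$ to $u_2$ give pair-cycles $\alpha_1 \cup \{u_1u_2\}$ and $\alpha_2 \cup \{u_1u_2\}$ that are strictly shorter than $C$ and hence separating by minimality; Lemma \ref{lem:cycle} then forces the third pair-cycle, which is $C$ itself, to be separating, contradicting the non-separability of $C$.

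Property (ii) is the main step and I expect to be the main obstacle. I consider $v \in A$ with neighbors $u_1, \ldots, u_t$ on $C$ in cyclic order, partitioning $C$ into arcs $B_1, \ldots, B_t$, each containing at least one edge. For any triple $u_i, u_j, u_k$, I form three ``short'' cycles $C_{ij}, C_{jk}, C_{ki}$ by combining two of the edges $vu_i, vu_j, vu_k$ with the arc of $C$ joining the corresponding pair. Applying Lemma \ref{lem:cycle} to the three internally disjoint paths from $v$ to $u_i$ (namely $vu_i$ itself and the two longer paths through $u_j$ and $u_k$), I argue that at least one of $C_{ij}, C_{jk}, C_{ki}$ is non-separating: if both $C_{ij}$ and $C_{ki}$ were separating, then Lemma \ref{lem:cycle} forces the third pair-cycle to be separating, and an edge-set computation shows this third pair-cycle equals the symmetric difference of $C$ and $C_{jk}$, forcing $C_{jk}$ itself to inherit the non-separating status of $C$. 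By minimality, this non-separating short cycle has length at least $|C|$, so the two arcs of $C$ it avoids must sum to at most $2$ edges. Choosing the triple $(u_1, u_3, u_5)$ when $t \ge 6$ makes all three arcs between these chosen vertices have at least two edges, violating the constraint and forcing $t \le 5$. The delicate bookkeeping around Lemma \ref{lem:cycle} in this paragraph is where the real work lies; once (ii) is established, the rest of the argument is the routine exponent calculation already indicated.
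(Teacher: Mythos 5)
Your proposal is correct and follows the same overall strategy as the paper: induct on genus, choose a shortest non-separating cycle $C$, apply Lemma \ref{lem:partition} with $A = V(G) \setminus V(C)$ and $B = V(C)$ and $d=5$, bound $\HG_s(G[B])$ by Lemma \ref{lem:LLL}, note $G[A]$ has genus at most $g-1$, and finish with the same exponent bookkeeping. Your property (i) is worth noting: the paper asserts that $G[B]$ has maximum degree $2$ without explicitly ruling out chords of $C$, and your Lemma \ref{lem:cycle} argument for chordlessness closes that small gap cleanly.

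The one place where you diverge, and the place you flagged as delicate, is property (ii). Your three-neighbor argument is correct — one can see directly that $[C_{ij}]+[C_{jk}]+[C_{ki}]=[C]\neq 0$ in $\mathbb{Z}_2$-homology, so one of the three short cycles is non-separating, yet with the triple $(u_1,u_3,u_5)$ each such cycle has at most $|C|-2$ edges, a contradiction — but the paper gets there with less machinery. It picks just two neighbors $x,y$ of $v$ at $C$-distance at least $3$ (possible since six points on a cycle of length at least six cannot all lie in a ball of radius $2$), sets $P_1 = (x,v,y)$, and lets $P_2,P_3$ be the two $x$--$y$ arcs of $C$; one checks that both $P_1 \cup P_2$ and $P_1 \cup P_3$ have fewer than $|C|$ edges, so both are separating by minimality, whence Lemma \ref{lem:cycle} forces $P_2 \cup P_3 = C$ to be separating, a contradiction. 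If you adopt the two-point version you avoid the symmetric-difference bookkeeping entirely; otherwise your version works as sketched once the homology identity above is spelled out.
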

\begin{proof}
If $g = 0$, then $G$ is planar, and there is nothing to prove. Otherwise, we assume that $g \geq 1$. Since $G$ has no planar embedding, we must be able to find some non-separating cycle $C$ in $G$. We choose $C$ to be a shortest non-separating cycle.

Now, we claim that every vertex $v \in V(G) \setminus V(C)$ has at most five neighbors in $C$. If $|V(C)| \leq 5$, then this claim clearly holds. Otherwise, suppose that $C$ is of length at least six. If $v$ has at least six neighbors in $C$, then we must be able to choose two neighbors $x,y \in V(C)$ of $v$ that are at a distance of at least three along $C$. We define the path $P_1 = (x, v, y)$. We also define the path $P_2$ to be a shortest path from $x$ to $y$ along $C$, and we define $P_3$ to be the path with edge set $E(C) \setminus E(P_2)$. We observe that the length of $P_1 \cup P_2$ is at most $\frac{1}{2}|V(C)| + 2 < |V(C)|$, and since $P_3$ has at most $|V(C)| - 3$ edges, the path $P_1 \cup P_3$ has length at most $|V(C)| - 1$. Since $C$ is a shortest nonseparating cycle, it follows that $P_1 \cup P_2$ and $P_1 \cup P_3$ are both separating cycles. However, then Lemma \ref{lem:cycle} implies that $P_2 \cup P_3 = C$ is a separating cycle, a contradiction. Thus our claim holds.

Now, we apply Lemma \ref{lem:partition}. We let $A = V(G) \setminus V(C)$ and let $B = V(C)$. Observe that since $C$ is a nonseparating cycle, $G[A]$ has genus at most $g-1$. Following Lemma \ref{lem:partition}, we set $d = 5$. Furthermore, since $G[B]$ has maximum degree $2$, it follows from Lemma \ref{lem:LLL} that $\HG_s(G[B]) < 3es < 9s$, so we let $s' = (9s)^5 s = 3^{10}s^6$. Then, according to Lemma \ref{lem:partition}, $\HG_s(G) \leq \HG_{s'}(G[A])$. By the induction hypothesis,
\begin{eqnarray*}
HG_{s'}(G[A]) &<& f(3^{2(6^{g-1}-1)}{s'}^{6^{g-1}})  \\
 & = & f(3^{2\cdot 6^{g-1}-2}(3^{10}s^6)^{6^{g-1}}) \\
 &=& f(3^{12 \cdot 6^{g-1} - 2}s^{6^g})  \\
& = & f(3^{2 (6^{g} -1 ) }s^{6^g}).
\end{eqnarray*}
This completes the proof.
\end{proof}

\section{Conclusion}
While we are not able to prove an upper bound for the hat guessing number of planar graphs, one principle that is clear from our method is that by bounding $\HG_s(G)$ for graphs $G$ in some small graph class, it is often possible to use such a bound along with some vertex partitioning method to obtain an upper bound for the hat guessing number of a larger graph class. Indeed, in order to obtain our upper bound for the hat guessing number of layered planar graphs, we started with an upper bound on $\HG_s(F)$ for forests $F$, and then we extended this result to an upper bound for petunias, and then we extended this result to an upper bound for outerplanar graphs and finally for layered planar graphs. We hope that our upper bound in Theorem \ref{thm:planar}, along with some clever observations, will be enough to bound the hat guessing number of all planar graphs and give an affirmative answer to Question \ref{ques:main}.

An obvious question remains that we have not yet addressed: Do our upper bounds in Theorems \ref{thm:OP_intro} and \ref{thm:planar_intro} really need to be so enormous? One of the main reasons for the sheer size of our upper bounds comes from our application of Erd\H{o}s's edge density theorem in Lemma \ref{lem:Erdos1}, and while we have shown that edge density arguments can be useful for bounding a graph's hat guessing number, we have by no means shown that they are necessary. However, He, Ido, and Przybocki \cite{HeWindmill} have shown that complete multipartite graphs are closely related to hat guessing on complete bipartite graphs and on cliques joined at a cut-vertex by working in an equivalent setting that considers ``prism" subsets of $\mathbb N^r$, and a similar idea appears in \cite{Jaroslaw}. Therefore, we suspect that the hat guessing problem is closely related to classical Tur\'an-type edge density problems, and while we do not believe that our upper bounds are best possible, we believe that even for outerplanar graphs, the correct upper bound for hat guessing number should be quite large. Therefore, we ask the following question, for which we suspect that the correct answer is at least $1000$.

\begin{question}
What is the largest value $\HG(G)$ that can be achieved by an outerplanar graph $G$?
\end{question}

\section{Acknowledgment}
I am grateful to Bojan Mohar for reading an earlier draft of this paper and for suggesting Lemma \ref{lem:cycle} as a way to improve a previous version of Theorem \ref{thm:genus}. I am also grateful to an anonymous referee for giving valuable feedback that improved the presentation of this paper.

\bibliographystyle{plain}
\bibliography{HatBib}

\section{Appendix}
\begin{proof}[Proof of Lemma \ref{lem:Erdos1}]
We induct on $r$. When $r = 2$, we must show that if $\mathcal H$ is a balanced bipartite graph on $2n$ vertices containing at least $3n^{2 - \frac{1}{\ell}}$ edges, then $\mathcal H$ contains a copy of $K_{\ell,\ell}$. By a classical theorem of K\H{o}v\'ari, S\'os, and Tur\'an, $\mathcal H$ contains a copy of $K_{\ell,\ell}$ as long as $|E(\mathcal H)| \geq (\ell - 1)^{1/\ell} (n - \ell + 1) n^{1 - 1/\ell} + (\ell - 1)n$. To show that $3n^{2 - \frac{1}{\ell}}$ is greater than this lower bound, we begin with the following inequality, which can easily be verified graphically:
\[\frac{3}{2} \ell^{1 - \frac{1}{\ell}} - \ell + 1 > 0.\]
Now, since $(\ell - 1)^{1 / \ell} < \frac{3}{2}$ for all $\ell$, and since $n \geq \ell$, we have
\[(3 - (\ell - 1)^{1 / \ell}) n^{1 - \frac{1}{\ell}} - \ell + 1 > 0.\]
Next, since $n > 0$, we have 
\[3n^{2 - \frac{1}{\ell}} - (\ell - 1)^{1 / \ell} n^{2 - \frac{1}{\ell}} - (\ell - 1)n > - (\ell -1)^{\frac{\ell + 1}{\ell}} n^{1 - \frac{1}{\ell}}.\]
Rearranging this equation gives us 
\[3n^{2 - \frac{1}{\ell}} >(\ell - 1)^{1 / \ell}(n - \ell + 1) n^{1 - \frac{1}{\ell}} - (\ell - 1)n,\]
which is exactly what we need to finish the base case.
Next, suppose that $r \geq 3$. We will need to borrow a lemma from the original proof of Erd\H{o}s.
\begin{lemma}\cite{Erdos1964}
\label{lem:Erdos}
Let $S = \{y_1, \dots, y_N\}$ be a set of $N$ elements, and let $A_1, \dots, A_n$ be subsets of $S$. Let $w > 0$, and assume that $\sum_{i = 1}^n |A_i| \geq \frac{nN}{w}$. If $n \geq 2 \ell^2 w^{\ell}$, then there exist $\ell$ distinct sets $A_{i_1}, \dots, A_{i_{\ell}}$ such that $|A_{i_1} \cap \dots \cap A_{i_{\ell}}| \geq \frac{N}{2w^{\ell}}$.
\end{lemma}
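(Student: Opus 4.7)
The strategy will be a standard double-counting argument applied to $\ell$-wise intersections. For each $y_j \in S$, set $d_j := |\{i : y_j \in A_i\}|$; the hypothesis $\sum_i |A_i| \geq nN/w$ then says exactly that $\sum_j d_j \geq nN/w$, so the average degree is at least $n/w$. The key quantity to analyze is
\[ Q := \sum_{T \in \binom{[n]}{\ell}} \Bigl| \bigcap_{i \in T} A_i \Bigr|, \]
which, by switching the order of summation, also equals $\sum_{j=1}^N \binom{d_j}{\ell}$.

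For the lower bound on $Q$ I will invoke convexity of $x \mapsto \binom{x}{\ell}$ (in the polynomial sense) and apply Jensen's inequality, which gives $Q \geq N \binom{n/w}{\ell}$. For the upper bound I will argue by contradiction: assuming that no $\ell$-subset $T$ satisfies $|\bigcap_{i \in T} A_i| \geq N/(2w^\ell)$ immediately gives $Q < \binom{n}{\ell} \cdot N/(2w^\ell)$. Comparing the two bounds reduces the whole lemma to the elementary inequality $2 w^\ell \binom{n/w}{\ell} \geq \binom{n}{\ell}$.

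This final comparison is a routine binomial estimate: using $\binom{n/w}{\ell} \geq (n/w - \ell + 1)^\ell / \ell!$ and $\binom{n}{\ell} \leq n^\ell/\ell!$, the ratio is at least $w^{-\ell}\bigl(1 - (\ell-1)w/n\bigr)^\ell$, and the hypothesis $n \geq 2\ell^2 w^\ell$ (which is much stronger than needed here --- even $n \gtrsim \ell^2 w$ would suffice) will make the bracketed factor at least $1/2$. I do not plan to optimize this calculation.

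The only delicate point, and the main thing to get right, is the Jensen step: the polynomial $x(x-1)\cdots(x-\ell+1)/\ell!$ can be negative on $(0,\ell-1)$ and is not globally convex. The cleanest fix is to restrict the averaging to indices $j$ with $d_j \geq \ell$; terms with $d_j < \ell$ contribute nothing to $Q$ in the combinatorial interpretation, and the large gap between $n/w$ and $\ell$ under the hypothesis ensures that the average over the high-degree indices still exceeds (a constant multiple of) $n/w$, with the lost constant absorbed into the factor of $2$ in the target bound $N/(2w^\ell)$. With that issue handled, the rest is mechanical.
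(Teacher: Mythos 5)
The paper does not prove this lemma; it is quoted from Erd\H{o}s~\cite{Erdos1964} without proof, so there is no in-paper argument to compare against. Your proposal --- double-counting $Q = \sum_j \binom{d_j}{\ell}$, lower-bounding it by convexity, upper-bounding it by the contrary assumption, and reducing to $2w^\ell\binom{n/w}{\ell} \geq \binom{n}{\ell}$ --- is the standard argument and it does work. One thing worth noting explicitly: the hypotheses silently force $w \geq 1$ (since $\sum_i |A_i| \leq nN$), and this is what guarantees $n/w \geq 2\ell^2 w^{\ell-1} \geq \ell$ (so Jensen applies in the regime where $\binom{x}{\ell}$ is increasing and convex) and also $\frac{(\ell-1)w}{n} \leq \frac{1}{2\ell}$, whence $\bigl(1 - (\ell-1)w/n\bigr)^\ell \geq \bigl(1 - \tfrac{1}{2\ell}\bigr)^\ell \geq \tfrac{1}{2}$, which is exactly the final estimate you need.

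The one place I would correct you is the repair of the convexity issue. The cleanest fix is not to restrict the index set but simply to replace $\binom{x}{\ell}$ by the function $h$ equal to the polynomial $\binom{x}{\ell}$ for $x \geq \ell - 1$ and equal to $0$ for $x < \ell - 1$; this $h$ is convex and nondecreasing on all of $\mathbb{R}$ and agrees with $\binom{d}{\ell}$ at every nonnegative integer, so $Q = \sum_j h(d_j) \geq N\,h(n/w) = N\binom{n/w}{\ell}$ with no bookkeeping. Your alternative of restricting to $J = \{j : d_j \geq \ell\}$ is also salvageable, but the justification you sketch --- that any lost constant can be ``absorbed into the factor of $2$'' --- is not right, because that factor of $2$ is already entirely consumed by the final inequality $\bigl(1 - (\ell-1)w/n\bigr)^\ell \geq \tfrac12$ (which is tight at $\ell = 1$). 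What actually saves the restricted argument is that nothing is lost: with $N' = |J|$ and $\tau = N/N' \geq 1$, one gets $\sum_{j\in J} d_j / N' \geq \tau(n/w - \ell + 1) + (\ell - 1)$, and a term-by-term comparison of the product forms shows $\binom{\tau(n/w-\ell+1)+\ell-1}{\ell} \geq \tau\binom{n/w}{\ell}$, so $N'\binom{\cdot}{\ell} \geq N\binom{n/w}{\ell}$ exactly as before. So the plan goes through, but not for the reason you gave; the convex-extension trick avoids the whole issue and is the version I would write up.
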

Now, suppose we have a balanced $r$-partite $r$-graph $\mathcal H$ with $rn$ vertices and $t \geq 3n^{r - \frac{1}{\ell^{r-1}}}$ edges. We choose one of the $r$ partite sets of $\mathcal H$ and name its vertices $x_1, \dots, x_n$. Next, we set $N = n^{r-1}$, and we let $y_1, \dots, y_N$ denote the set of $(r-1)$-tuples of vertices that can be obtained by choosing exactly one vertex from each partite set of $\mathcal H$ outside of $\{x_1, \dots, x_r\}$. Then, for each $x_i$, we let $A_i$ contain those $y_j$ for which $x_i \cup y_j \in E(\mathcal H)$. We have 
\[\sum_{i = 1}^n |A_i| = t \geq 3n^{r - \frac{1}{\ell^{r-1}}}.\]
We set $w = \frac{1}{3} n^{\frac{1}{\ell^{r-1}}},$ and then it is easy to verify that $t \geq \frac{nN}{w}$ and that $n \geq 2 \ell^2 w^{\ell}$, so the hypotheses of Lemma \ref{lem:Erdos} hold. Hence, we may choose $\ell$ vertices $x_{i_1}, \dots, x_{i_{\ell}}$ whose neighborhoods intersect in at least 
\[\frac{N}{2w^{\ell}} = \frac{3^{\ell}}{2} n^{r - \frac{1}{\ell^{r-2}}} > 3  n^{r - \frac{1}{\ell^{r-2}}}\]
$(r-1)$-tuples. Then, by the induction hypothesis, we may find a copy of $K^{(r-1)}_{\ell}$ among these $(r-1)$-tuples, and this $K^{(r-1)}_{\ell}$ along with the vertices $x_{i_1}, \dots, x_{i_{\ell}}$ form a copy of $K^{(r)}_{\ell}$. This completes the proof.
\end{proof}
\begin{proof}[Proof of estimates in Theorem \ref{thm:planar}]
Recall that $\ell_5 = 20s$ and $s_4 = (20)^6s^7$. We will use the inequalities
\begin{eqnarray}
 \label{eq:1} (3s^2 + 3s + 3)^{3(s^2+s+1)^2}+1 &<& 2^{(3s)^{5}} \\
\label{eq:2} (3 (s+1)^6 + 3(s+1)^3 + 3)^{3((s+1)^6 + (s+1)^3 + 1)^2} +1&<& 2^{(3s)^{13}}
\end{eqnarray}
for $s \geq 1$.
From (\ref{eq:1}), we see that 
\[\log_2 \ell_4 < (3s_4)^5 = 3^5 2^{60} 5^{30} s^{35} < 2^{138}s^{35}.\]
Then, 
\[\log_2 s_3 = \log s + 6 \log_2 \ell_4 < 2^{141}s^{35}.\]
Then, using (\ref{eq:2}), 
$\log_2 \ell_3 < ( 3s_3)^{13}$, and so 
\[\log_2 \log_2 \ell_3 < 13 \log_2 3 + 13 \log_2 s_3 < 2^{145} s^{35}.\]
The remaining bounds can be proven similarly using (\ref{eq:2}).
\end{proof}

\end{document}